\documentclass[12pt]{article}

\usepackage[a4paper, top=3cm, left=2.5cm, right=2.5cm, bottom=3cm]{geometry}
\usepackage{amsmath, amsthm}
\usepackage{amssymb}
\usepackage{multicol,comment}
\usepackage{amsfonts}
\usepackage{graphics, color, enumerate, fancyhdr, dsfont}

\overfullrule=5pt

%




    \newcommand{\R}{\mathbb{R}}



















    
    \newcommand{\Bwf}{\mathcal{B}}

    \newcommand{\Hwf}{\mathcal{H}}
    \newcommand{\Iwf}{\mathcal{I}}

    \newcommand{\Mwf}{\mathcal{M}}
    \newcommand{\Nwf}{\mathcal{N}}
    
    \newcommand{\Pwf}{\mathcal{P}}

    \newcommand{\Swf}{\mathcal{S}}

    \newcommand{\afrak}{\mathfrak{a}}
    \newcommand{\bfrak}{\mathfrak{b}}
    \newcommand{\cfrak}{\mathfrak{c}}
    \newcommand{\dfrak}{\mathfrak{d}}

    \newcommand{\gfrak}{\mathfrak{g}}

    \newcommand{\pfrak}{\mathfrak{p}}
    \newcommand{\rfrak}{\mathfrak{r}}
    \newcommand{\sfrak}{\mathfrak{s}}
    
    \newcommand{\ufrak}{\mathfrak{u}}

    \newcommand{\menos}{\smallsetminus}

    \newcommand{\frestr}{\!\!\upharpoonright\!\!}

    \newcommand{\add}{\mbox{\rm add}}
    \newcommand{\cov}{\mbox{\rm cov}}
    \newcommand{\non}{\mbox{\rm non}}
    \newcommand{\cof}{\mbox{\rm cof}}

    
    \newcommand{\Bor}{\mathds{B}}
    \newcommand{\Cor}{\mathds{C}}
    \newcommand{\Dor}{\mathds{D}}
    \newcommand{\Eor}{\mathds{E}}
    
    \newcommand{\Mor}{\mathds{M}}
    \newcommand{\Por}{\mathds{P}}
    \newcommand{\Qor}{\mathds{Q}}
    
    \newcommand{\Sor}{\mathds{S}}
    
    \newcommand{\Qnm}{\dot{\mathds{Q}}}
    \newcommand{\Rnm}{\dot{\mathds{R}}}

    \newcommand{\cf}{\mbox{\rm cf}}














\title{Some models produced by 3D iterations}
\author{Diego Alejandro Mej\'{\i}a}
\date{\small Faculty of Science\\ Shizuoka University\\ 836 Ohya, Suruga-ku, 422-8529 Shizuoka, Japan\\ \texttt{diego.mejia@shizuoka.ac.jp}}

\begin{document}

\makeatletter
\def\@roman#1{\romannumeral #1}
\makeatother

\theoremstyle{plain}
  \newtheorem{theorem}{Theorem}[section]
  \newtheorem{corollary}[theorem]{Corollary}
  \newtheorem{lemma}[theorem]{Lemma}
  \newtheorem{prop}[theorem]{Proposition}
  \newtheorem{claim}[theorem]{Claim}
  \newtheorem{exer}[theorem]{Exercise}
\theoremstyle{definition}
  \newtheorem{definition}[theorem]{Definition}
  \newtheorem{example}[theorem]{Example}
  \newtheorem{remark}[theorem]{Remark}
  \newtheorem{context}[theorem]{Context}
  \newtheorem{question}[theorem]{Question}
  \newtheorem{problem}[theorem]{Problem}
  \newtheorem{notation}[theorem]{Notation}

\maketitle

\newcommand{\la}{\langle}
\newcommand{\ra}{\rangle}
\newcommand{\id}{\mathrm{id}}
\newcommand{\sig}{\boldsymbol{\Sigma}}
\newcommand{\cosig}{\boldsymbol{\Pi}}

\newcommand{\leqdi}{\preceq_{\mathrm{di}}}
\newcommand{\eqdi}{\approx_{\mathrm{di}}}
\newcommand{\leqcdi}{\preceq_{\mathrm{cdi}}}
\newcommand{\eqcdi}{\approx_{\mathrm{cdi}}}
\newcommand{\eqPc}{\approx_{\mathrm{P}}}

\begin{abstract}
   We use the techniques in \cite{VFJB11,mejia2,FFMM} to construct models, by three-dimensional arrays of ccc posets, where many classical cardinal characteristics of the continuum are pairwise different.
\end{abstract}

\section{Introduction}\label{SecIntro}

For quite some time, researchers in set theory have been working on producing models of ZFC where more than two cardinal characteristics of the continuum are pairwise different. The first techniques in this direction are the preservation properties for cardinal characteristics in the context of FS (finite support) iterations by Judah and Shelah \cite{jushe}, later refined by Brendle \cite{Br-Cichon} to produce models for all the possible consistent constellations of Cicho\'n's diagram in two pieces (excluding $\aleph_1$) under the conditions $\aleph_1<\add(\Nwf)$ and $\non(\Mwf)\leq\cov(\Mwf)$ (this last restriction is unavoidable in the context of FS iterations). Other earlier example is produced by Blass and Shelah \cite{blassmatrix} where they force $\aleph_1<\ufrak<\dfrak$ by a FS iteration constructed by a two-dimensional array of ccc posets, technique usually referred to as \emph{matrix iterations}.

The first time the term \emph{matrix iterations} appeared was in \cite{VFJB11} where Brendle and V. Fischer used that technique to prove the consistency of $\bfrak=\afrak<\sfrak$ by assigning arbitrary regular values, and likewise the consistency of $\bfrak=\sfrak<\afrak$ but using a measurable cardinal in the ground model. Later on, the author \cite{mejia} induced Judah-Shelah-Brendle preservation theory into matrix iterations to produce models where several cardinal invariants in Cicho\'n's diagram are pairwise different. For instance, a model where those cardinal invariants are separated into 6 different values (including $\aleph_1$). Quite simultaneously, the author collected related techniques in \cite{mejia2} to produce models where other classical cardinal characteristics of the continuum are separated into several values, like $\pfrak$, $\sfrak$, $\rfrak$ and $\ufrak$.

There are other techniques outside the context of FS iterations of ccc posets to produce models where several cardinal invariants are pairwise different. A very remarkable one is \emph{large product constructions by creature forcing}. Goldstern and Shelah \cite{GSmany} produced a model where $\aleph_1$-many definable cardinal invariants are pairwise different, while Kellner \cite{Kellnermany} improved their construction to obtain a model with $\cfrak$-many pairwise different invariants. Later on, this technique was solidified in \cite{RosSh,KScreat,joyhalv} and, quite recently, A. Fischer, Goldstern, Kellner and Shelah \cite{FGKS} used it to construct a model where the cardinal invariants in Cicho\'n's diagram are separated into 5 different values. This model is quite special because it succeeds to separate 5 cardinals only on the right half of the diagram, while so far it has been possible to separate this half into three values with FS iteration techniques. On the other hand, these creature constructions are typically $\omega^\omega$-bounding, so they do not work to separate many values on the left side of the diagram, which can be done by FS iterations. For instance, Goldstern, Shelah and the author \cite{GMS} produced a model by a FS iteration where all the cardinals on the left side of the diagram are pairwise different (another example of 6 values). It is also possible, with FS iterations, to produce models where infinitely many cardinal characteristics are pairwise different, see e.g. \cite{KO}.

In the last year V. Fischer, Friedman, Montoya and the author \cite{FFMM} constructed the first example of a 3D array of ccc posets, called a \emph{3D-coherent system}, to force a model where the cardinals in Cicho\'n's diagram are separated into 7 different values. In addition, the techniques of \cite{VFJB11} to preserve a mad family can be applied in this context, so $\bfrak=\afrak$ can also be forced, even more, this equality can be forced in many of the precedent instances of models separating Cicho\'n's diagram with FS iterations.

In the same spirit as \cite{mejia2}, we use 3D-coherent systems as in \cite{FFMM} to construct models where several classical cardinal invariants (others than those in Cicho\'n's diagram) are pairwise different. These examples follow directly from the theory presented in \cite{FFMM,mejia2} and are just very simple modifications of the models constructed in \cite{FFMM}. 

This paper is structured as follows. Section \ref{SecPre} is devoted to the preliminaries, that is, the notion of coherent systems and the preservation theory are reviewed from \cite{FFMM} plus some additional examples taken from \cite{mejia2}. In Section \ref{SecAppl} the applications of the preceding theory are presented. 

\section{Preliminaries}\label{SecPre}

\subsection{Coherent systems of FS iterations}\label{SubsecCoherent}

For posets $\Por$ and $\Qor$ the relation $\Por\lessdot\Qor$ means that $\Por$ is a complete subposet of $\Qor$. If $M$ is a transitive model of ZFC and $\Por\in M$, $\Por\lessdot_M\Qor$ means that $\Por$ is a subposet of $\Qor$ and that every maximal antichain of $\Por$ in $M$ is a maximal antichain of $\Qor$.

\newcommand{\sbf}{\mathbf{s}}
\newcommand{\tbf}{\mathbf{t}}
\newcommand{\matit}{\mathbf{m}}

\begin{definition}[Coherent system of FS iterations {\cite[Def. 3.2]{FFMM}}]\label{DefCoherent}
   A \emph{coherent system (of FS iterations)} $\sbf$ is composed by the following objects:
   \begin{enumerate}[(I)]
     \item a partially ordered set $I^\sbf$ and an ordinal $\pi^\sbf$,
     \item a system of posets $\la\Por^\sbf_{i,\xi}:i\in I^\sbf,\xi\leq\pi^\sbf\ra$ such that
           \begin{enumerate}[(i)]
              \item $\Por^\sbf_{i,0}\lessdot\Por^\sbf_{j,0}$ whenever $i\leq j$ in $I^\sbf$, and
              \item $\Por^\sbf_{i,\eta}$ is the direct limit of $\la\Por^\sbf_{i,\xi}:\xi<\eta\ra$ for each limit $\eta\leq\pi^\sbf$,
           \end{enumerate}
     \item a sequence $\la\Qnm^\sbf_{i,\xi}:i\in I^\sbf,\xi<\pi^\sbf\ra$ where each $\Qnm^\sbf_{i,\xi}$ is a $\Por^\sbf_{i,\xi}$-name for a poset, $\Por^\sbf_{i,\xi+1}=\Por^\sbf_{i,\xi}\ast\Qnm^\sbf_{i,\xi}$ and $\Por^\sbf_{j,\xi}$ forces $\Qnm^\sbf_{i,\xi}\lessdot_{V^{\Por^\sbf_{i,\xi}}}\Qnm^\sbf_{j,\xi}$ whenever $i\leq j$ in $I^\sbf$ and $\Por^\sbf_{i,\xi}\lessdot\Por^\sbf_{j,\xi}$.
   \end{enumerate}
   Note that, for a fixed $i\in I^\sbf$, the posets $\la\Por^\sbf_{i,\xi}:\xi\leq\pi^\sbf\ra$ are generated by an FS iteration $\la\Por'_{i,\xi},\Qnm'_{i,\xi}:\xi<1+\pi^\sbf\ra$ where $\Qnm'_{i,0}=\Por^\sbf_{i,0}$ and $\Qnm'_{i,1+\xi}=\Qnm^\sbf_{i,\xi}$ for all $\xi<\pi^\sbf$. Therefore (by induction) $\Por'_{i,1+\xi}=\Por_{i,\xi}$ for all $\xi\leq\pi^\sbf$ and, thus, $\Por^\sbf_{i,\xi}\lessdot\Por^\sbf_{i,\eta}$ whenever $\xi\leq\eta\leq\pi^\sbf$.

   On the other hand, by Lemma  \ref{parallellimits}, $\Por^\sbf_{i,\xi}\lessdot\Por^\sbf_{j,\xi}$ whenever $i\leq j$ in $I^\sbf$ and $\xi\leq\pi^\sbf$.

   For $j\in I^\sbf$ and $\eta\leq\pi^\sbf$ we write $V^\sbf_{j,\eta}$ for the $\Por^\sbf_{j,\eta}$-generic extensions. Concretely, if $G$ is $\Por^\sbf_{j,\eta}$-generic over $V$, $V^\sbf_{j,\eta}:=V[G]$ and $V^\sbf_{i,\xi}:=V[\Por^\sbf_{i,\xi}\cap G]$ for all $i\leq j$ in $I^\sbf$ and $\xi\leq\eta$. Note that $V^\sbf_{i,\xi}\subseteq V^\sbf_{j,\eta}$.

   We say that the coherent system $\sbf$ has the \emph{ccc} if, additionally, $\Por^\sbf_{i,0}$ has the ccc and $\Por^\sbf_{i,\xi}$ forces that $\Qnm^\sbf_{i,\xi}$ has the ccc for each $i\in I^\sbf$ and $\xi<\pi^\sbf$. This implies that $\Por^\sbf_{i,\xi}$ has the ccc for all $i\in I^\sbf$ and $\xi\leq\pi^\sbf$.

   We consider the following particular cases.
   \begin{enumerate}[(1)]
      \item When $I^\sbf$ is a well-ordered set, we say that $\sbf$ is a \emph{2D-coherent system (of FS iterations)}.
      \item If $I^\sbf$ is of the form $\{i_0,i_1\}$ ordered as $i_0<i_1$, we say that $\sbf$ is a \emph{coherent pair (of FS iterations)}.
      \item If $I^\sbf=\gamma^\sbf\times\delta^\sbf$ where $\gamma^\sbf$ and $\delta^\sbf$ are ordinals and the order of $I^\sbf$ is defined as $(\alpha,\beta)\leq(\alpha',\beta')$ iff $\alpha\leq\alpha'$ and $\beta\leq\beta'$,
          we say that $\sbf$ is a \emph{3D-coherent system (of FS iterations).}
   \end{enumerate}

   For a coherent system $\sbf$ and a set $J\subseteq I^\sbf$, $\sbf|J$ denotes the coherent system with $I^{\sbf|J}=J$, $\pi^{\sbf|J}=\pi^\sbf$ and the posets and names corresponding to (II) and (III) defined as for $\sbf$. And if $\eta\leq\pi^\sbf$, $\sbf\frestr\eta$ denotes the coherent system with $I^{\sbf\upharpoonright\eta}=I^\sbf$, $\pi^{\sbf\upharpoonright\eta}=\eta$ and the posets for (II) and (III) defined as for $\sbf$. Note that, if $i_0<i_1$ in $I^\sbf$, then $\sbf|\{i_0,i_1\}$ is a coherent pair and $\sbf|\{i_0\}$ corresponds just to the FS iteration $\la\Por'_{i_0,\xi},\Qnm'_{i_0,\xi}:\xi<1+\pi^\sbf\ra$ (see the comment after (III)).

   If $\tbf$ is a 3D-coherent system, for $\alpha<\gamma^\tbf$, $\tbf_\alpha:=\tbf|\{(\alpha,\beta):\beta<\delta^\tbf\}$ is a 2D-coherent system where $I^{\tbf_\alpha}$ has order type $\delta^\tbf$. For $\beta<\delta^\tbf$, $\tbf^\beta:=\tbf|\{(\alpha,\beta):\alpha<\delta^\tbf\}$ is a 2D-coherent system where $I^{\tbf^\beta}$ has order type $\gamma^\tbf$.

   In particular, the upper indices $\sbf$ are omitted when there is
   no risk of ambiguity.
\end{definition}

\begin{lemma}\label{GenNoNewReals}
   Let $\matit$ be a ccc 2D-coherent system with $I^\matit=\gamma+1$ an ordinal and $\pi^\matit=\pi$. Assume that
   \begin{enumerate}[(i)]
     \item $\gamma$ has uncountable cofinality,
     \item $\Por_{\gamma,0}$ is the direct limit of $\la\Por_{\alpha,0}:\alpha<\gamma\ra$, and
     \item for any $\xi<\pi$, $\Por_{\gamma,\xi}$ forces ``$\Qnm_{\gamma,\xi}=\bigcup_{\alpha<\gamma}\Qnm_{\alpha,\xi}$" whenever $\Por_{\gamma,\xi}$ is the direct limit of $\la\Por_{\alpha,\xi}:\alpha<\gamma\ra$.
   \end{enumerate}
   Then, for any $\xi\leq\pi$, $\Por_{\gamma,\xi}$ is the direct limit of $\la\Por_{\alpha,\xi}:\alpha<\gamma\ra$. In particular, $\Por_{\gamma,\xi}$ forces that $\R\cap V_{\gamma,\xi}=\bigcup_{\alpha<\gamma}\R\cap V_{\alpha,\xi}$.
\end{lemma}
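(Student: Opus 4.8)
The plan is to prove the main assertion by induction on $\xi\le\pi$, and then derive the ``in particular'' clause from it. Throughout I would freely use that $\Por_{\alpha,\xi}\lessdot\Por_{\gamma,\xi}$ for $\alpha<\gamma$ (the comment after (III)) and that, for each $i$, the vertical iteration $\la\Por_{i,\xi}:\xi\le\pi\ra$ takes direct limits at limit stages by (II)(ii). The base case $\xi=0$ is exactly hypothesis (ii). For $\xi$ a limit, I would argue with finite supports: since $\Por_{\gamma,\xi}$ is the direct limit of $\la\Por_{\gamma,\zeta}:\zeta<\xi\ra$, any $p\in\Por_{\gamma,\xi}$ already belongs to $\Por_{\gamma,\zeta}$ for some $\zeta<\xi$; the induction hypothesis at $\zeta$ places $p$ in some $\Por_{\alpha,\zeta}\subseteq\Por_{\alpha,\xi}$ with $\alpha<\gamma$, so $\Por_{\gamma,\xi}=\bigcup_{\alpha<\gamma}\Por_{\alpha,\xi}$. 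Notice this case needs neither (i) nor the ccc.

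The successor case $\xi=\zeta+1$ is the heart of the argument and the one place where I expect real work. Here $\Por_{\gamma,\zeta+1}=\Por_{\gamma,\zeta}\ast\Qnm_{\gamma,\zeta}$, so a condition has the form $(p',\dot q)$ with $p'\in\Por_{\gamma,\zeta}$ and $\dot q$ a $\Por_{\gamma,\zeta}$-name for a member of $\Qnm_{\gamma,\zeta}$. By the induction hypothesis $\Por_{\gamma,\zeta}$ is the direct limit of $\la\Por_{\alpha,\zeta}:\alpha<\gamma\ra$, so hypothesis (iii) applies and gives $\Vdash_{\Por_{\gamma,\zeta}}\Qnm_{\gamma,\zeta}=\bigcup_{\alpha<\gamma}\Qnm_{\alpha,\zeta}$; I would also record that the names $\Qnm_{\alpha,\zeta}$ form a $\lessdot$-increasing chain (forced), by (III). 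Choose first $\alpha_0<\gamma$ with $p'\in\Por_{\alpha_0,\zeta}$. Next, let $\dot\alpha$ be a $\Por_{\gamma,\zeta}$-name for the least $\alpha<\gamma$ with $\dot q\in\Qnm_{\alpha,\zeta}$, which is well-defined below $p'$ by the displayed equality. Since $\Por_{\gamma,\zeta}$ is ccc, a maximal antichain below $p'$ deciding $\dot\alpha$ is countable, and since $\cf(\gamma)>\omega$ its countably many decided values are bounded by some $\alpha^*<\gamma$; hence $p'\Vdash\dot q\in\Qnm_{\alpha^*,\zeta}$. Put $\alpha=\max(\alpha_0,\alpha^*)$.

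It remains to recognize $(p',\dot q)$ as a condition of $\Por_{\alpha,\zeta+1}$. Since $\Por_{\alpha,\zeta}\lessdot\Por_{\gamma,\zeta}$ and $\Qnm_{\alpha,\zeta}$ is a $\Por_{\alpha,\zeta}$-name, the set $\Qnm_{\alpha,\zeta}$ and all its members already lie in the intermediate extension $V_{\alpha,\zeta}$; as $p'$ forces $\dot q$ into $\Qnm_{\alpha,\zeta}$, I can reinterpret $\dot q$ as a $\Por_{\alpha,\zeta}$-name $\dot q'$ with $p'\Vdash\dot q'=\dot q$. Then $(p',\dot q')\in\Por_{\alpha,\zeta}\ast\Qnm_{\alpha,\zeta}=\Por_{\alpha,\zeta+1}$ and it is equivalent to $(p',\dot q)$ in $\Por_{\gamma,\zeta+1}$ (they agree below $p'$, which is all that the second coordinate records), so the union $\bigcup_{\alpha<\gamma}\Por_{\alpha,\zeta+1}$ exhausts $\Por_{\gamma,\zeta+1}$ up to this standard identification of names forced equal. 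This is exactly the step where both hypothesis (i) (to bound $\dot\alpha$, hence the support of $\dot q$) and the ccc (to keep the relevant antichain countable) are indispensable, and where I expect the only genuine subtlety, namely the passage from a $\Por_{\gamma,\zeta}$-name to a $\Por_{\alpha,\zeta}$-name.

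Finally, for the ``in particular'' clause I would argue as usual that ccc posets which are direct limits add no reals over the union of the factors: given a $\Por_{\gamma,\xi}$-name $\dot x$ for a real, the value $\dot x(n)$ is decided, for each $n$, by a countable maximal antichain; by the direct limit just established each condition of these antichains lies in some $\Por_{\alpha,\xi}$, and since there are only countably many of them and $\cf(\gamma)>\omega$ they all lie in a single $\Por_{\alpha,\xi}$ with $\alpha<\gamma$. Hence $\dot x$ is (equivalent to) a $\Por_{\alpha,\xi}$-name and its value lies in $V_{\alpha,\xi}$, giving $\R\cap V_{\gamma,\xi}\subseteq\bigcup_{\alpha<\gamma}\R\cap V_{\alpha,\xi}$; the reverse inclusion is immediate from $V_{\alpha,\xi}\subseteq V_{\gamma,\xi}$.
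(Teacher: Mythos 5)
The paper itself states this lemma without proof (it is imported from \cite{FFMM}), so your attempt has to be judged on its own terms. Your skeleton is the right one --- induction on $\xi$, base case from (ii), the limit case via finiteness of supports, and the ccc-plus-cofinality antichain argument for the ``in particular'' clause are all fine --- but the successor step, which you rightly call the heart of the argument, has a genuine gap at its final move. From $p'\Vdash\dot q\in\Qnm_{\alpha,\zeta}$ you conclude that $\dot q$ ``can be reinterpreted'' as a $\Por_{\alpha,\zeta}$-name $\dot q'$ with $p'\Vdash\dot q=\dot q'$. This is false: forcing the \emph{value} of $\dot q$ into the poset $\Qnm_{\alpha,\zeta}$ bounds where that value sits inside the union $\bigcup_{\alpha<\gamma}\Qnm_{\alpha,\zeta}$, but it does not bound the \emph{support of the name} $\dot q$, i.e.\ which part of $\Por_{\gamma,\zeta}$ is needed to compute it. Concretely, take the paper's own system $\matit(\gamma,\delta)$, in which every $\Qnm_{\alpha,\xi}$ is checked Cohen forcing $\check{\Cor}$: let $p'$ be the trivial condition and let $\dot q$ name the Cohen condition $\{(0,\dot{c}_\beta(0))\}$, where $\dot c_\beta$ is the Cohen real added by the $\beta$-th coordinate of the base $\Cor_\gamma$, with $\beta$ large. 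Every condition forces $\dot q\in\Qnm_{0,\zeta}$, so your $\dot\alpha$ is constantly $0$, giving $\alpha^*=\alpha_0=0$ and $\alpha=0$; yet no $\Por_{0,\zeta}$-name is forced by $p'$ to equal $\dot q$, because $\dot c_\beta$ is Cohen-generic over $V_{0,\zeta}$ and so $\dot c_\beta(0)$ is not computable from the $\Por_{0,\zeta}$-generic. Your antichain decides the wrong object: it decides the index $\dot\alpha$, whereas what must be ``decided'' is the name itself. (Note the contrast with your argument for the ``in particular'' clause, which is sound precisely because the possible values of $\dot x(n)$ are ground-model objects, so single conditions can decide them; elements of $\Qnm_{\gamma,\zeta}$ are not ground-model objects.)

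The repair is a density-plus-mixing argument, and it is where the real work of the lemma lies. First show that the set $D$ of conditions $r\leq p'$ for which there exist $\alpha_r<\gamma$ and a $\Por_{\alpha_r,\zeta}$-name $\dot s_r$ for a member of $\Qnm_{\alpha_r,\zeta}$ with $r\Vdash\dot q=\dot s_r$ is dense below $p'$: in a generic extension $V[G]$ the value of $\dot q$ lies in $\Qnm_{\alpha,\zeta}[G\cap\Por_{\alpha,\zeta}]\subseteq V[G\cap\Por_{\alpha,\zeta}]$ for some $\alpha<\gamma$, hence it equals the evaluation of some $\Por_{\alpha,\zeta}$-name, and some condition in $G$ below the given one forces this equality. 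Then take a countable (by ccc) maximal antichain $\{r_n:n<\omega\}\subseteq D$ below $p'$; using $\cf(\gamma)>\omega$ together with the induction hypothesis, fix $\alpha<\gamma$ above $\alpha_0$, above all $\alpha_{r_n}$, and large enough that every $r_n$ lies in $\Por_{\alpha,\zeta}$. Since $\Por_{\alpha,\zeta}\lessdot\Por_{\gamma,\zeta}$, this antichain is still maximal below $p'$ in $\Por_{\alpha,\zeta}$, so you may \emph{mix} the names $\dot s_{r_n}$ over it to obtain a single $\Por_{\alpha,\zeta}$-name $\dot q'$ for a member of $\Qnm_{\alpha,\zeta}$ with $p'\Vdash_{\Por_{\gamma,\zeta}}\dot q=\dot q'$. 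With this replacement for your choice of $\alpha$ and $\dot q'$, the rest of your proof (the identification of $(p',\dot q)$ with $(p',\dot q')$, and the two remaining cases) goes through.
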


\subsection{Preservation theory}\label{SubsecPres}

\newcommand{\Rbf}{\mathbf{R}}

\begin{definition}\label{ContextFS}
   $\Rbf:=\la X,Y,\sqsubset\ra$ is a \emph{Polish relational system (Prs)} if the following is satisfied:
   \begin{enumerate}[(i)]
      \item $X$ is a perfect Polish space,
      \item $Y$ is a non-empty analytic subspace of some Polish space $Z$ and
      \item $\sqsubset=\bigcup_{n<\omega}\sqsubset_n$ for some increasing sequence $\la\sqsubset_n\ra_{n<\omega}$ of closed subsets of $X\times Z$ such that
            $(\sqsubset_n)^y=\{x\in X: x\sqsubset_n y\}$ is nwd (nowhere dense) for all $y\in Y$.
   \end{enumerate}
   For $x\in X$ and $y\in Y$, $x\sqsubset y$ is often read \emph{$y$ $\sqsubset$-dominates $x$}. A family $F\subseteq X$ is \emph{$\Rbf$-unbounded} if there is \underline{no} real in $Y$ that $\sqsubset$-dominates every member of $F$. Dually, $D\subseteq Y$ is a \emph{$\Rbf$-dominating} family if every member of $X$ is $\sqsubset$-dominated by some member of $D$. $\bfrak(\Rbf)$ denotes the least size of a $\Rbf$-unbounded family and $\dfrak(\Rbf)$ is the least size of a $\Rbf$-dominating family.

   Say that $x\in X$ is \emph{$\Rbf$-unbounded over a set $A$} if $x\not\sqsubset y$ for all $y\in Y\cap A$. Given a cardinal $\lambda$ say that $F\subseteq X$ is \emph{$\lambda$-$\Rbf$-unbounded} if, for any $A\subseteq Y$ of size $<\lambda$, there is an $x\in F$ which is $\Rbf$-unbounded over $A$. On the other hand, say that $F$ is \emph{strongly $\lambda$-$\Rbf$-unbounded} if $|F|\geq\lambda$ and $|\{x\in F:x\sqsubset y\}|<\lambda$ for all $y\in Y$.
\end{definition}

When $\lambda$ is regular, any strongly $\lambda$-$\Rbf$-unbounded family is $\lambda$-$\Rbf$-unbounded.

By (iii), $\la X,\Mwf(X),\in\ra$ is Tukey-Galois below $\Rbf$ where $\Mwf(X)$ denotes the $\sigma$-ideal of meager subsets of $X$. Therefore, $\bfrak(\Rbf)\leq\non(\Mwf)$ and $\cov(\Mwf)\leq\dfrak(\Rbf)$. Fix, for this subsection, a Prs $\Rbf=\la X,Y,\sqsubset\ra$.

\begin{definition}[Judah and Shelah {\cite{jushe}}]\label{DefGood}
   Let $\theta$ be a cardinal. A poset $\Por$ is \emph{$\theta$-$\Rbf$-good} if, for any $\Por$-name $\dot{h}$ for a real in $Y$, there is a non-empty $H\subseteq Y$ of size $<\theta$ such that $\Vdash x\not\sqsubset\dot{h}$ for any $x\in X$ that is $\Rbf$-unbounded over $H$.

   Say that $\Por$ is \emph{$\Rbf$-good} when it is $\aleph_1$-$\Rbf$-good.
\end{definition}

Definition \ref{DefGood} describes a property, respected by FS iterations, to preserve specific types of $\Rbf$-unbounded families. Concretely, when $\theta$ is an uncountable regular cardinal,
\begin{enumerate}[(a)]
  \item any $\theta$-cc $\theta$-$\Rbf$-good poset preserves every $\lambda$-$\Rbf$-unbounded family from the ground model when $\lambda\geq\theta$, it preserves strongly $\lambda$-$\Rbf$-unbounded families when $\cf(\lambda)\geq\theta$, and
  \item FS iterations of $\theta$-cc $\theta$-$\Rbf$-good posets produce $\theta$-$\Rbf$-good posets.
\end{enumerate}
Posets that are $\theta$-$\Rbf$-good work to preserve $\bfrak(\Rbf)$ small and $\dfrak(\Rbf)$ large since, whenever $F$ is a $\lambda$-$\Rbf$-unbounded family with $\lambda\geq2$, $\bfrak(\Rbf)\leq|F|$ and $\lambda\leq\dfrak(\Rbf)$.

Clearly, $\theta$-$\Rbf$-good implies $\theta'$-$\Rbf$-good whenever $\theta\leq\theta'$ and any poset completely embedded into a $\theta$-$\Rbf$-good poset is also $\theta$-$\Rbf$-good.

\begin{lemma}[{\cite[Lemma 4]{mejia}}]\label{smallGood}
   If $\theta$ is regular, any poset of size $<\theta$ is $\theta$-$\Rbf$-good. In particular, Cohen forcing is $\Rbf$-good.
\end{lemma}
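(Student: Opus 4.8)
The plan is to exhibit, for an arbitrary $\Por$-name $\dot{h}$ for a real in $Y$, a set $H\subseteq Y$ with $|H|\le|\Por|<\theta$ that witnesses goodness. I would first record the elementary reduction coming from the fact that the relation is Borel: since $\sqsubset=\bigcup_{n<\omega}\sqsubset_n$ with each $\sqsubset_n$ closed, for a fixed $x\in X$ the fibre $\{z:x\sqsubset z\}=\bigcup_n(\sqsubset_n)_x$ is $F_\sigma$, so ``$x\sqsubset\dot{h}$'' is a Borel event about $\dot{h}$; consequently $\Vdash x\not\sqsubset\dot{h}$ fails if and only if some condition $p$ forces $x\sqsubset\dot{h}$. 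Hence it suffices to build $H$ so that whenever some $p\in\Por$ forces $x\sqsubset\dot{h}$, then $x\sqsubset y$ for some $y\in H$: any $x$ that is $\Rbf$-unbounded over such an $H$ then satisfies $\Vdash x\not\sqsubset\dot{h}$, which is exactly $\theta$-$\Rbf$-goodness.

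To construct $H$ I would realize, for each condition, one ``possible value'' of $\dot{h}$ as a genuine ground-model element of $Y$. Fix a continuous surjection $f\colon\omega^\omega\to Y$ (possible since $Y$ is analytic) and a $\Por$-name $\dot{a}$ for a real in $\omega^\omega$ with $\dot{h}=f(\dot{a})$. For each $p\in\Por$ I would then choose a decreasing sequence of conditions $p=q^p_0\ge q^p_1\ge\cdots$ such that $q^p_{m+1}$ decides $\dot{a}\restriction(m+1)$; this produces a branch $a_p\in\omega^\omega\cap V$ and a value $y_p:=f(a_p)\in Y\cap V$. The crucial feature is that any generic filter $G$ containing all the $q^p_m$ satisfies $\dot{a}[G]=a_p$, and therefore $\dot{h}[G]=f(a_p)=y_p$; in other words $y_p$ is an honest value of $\dot{h}$ that happens to lie in the ground model. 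Set $H:=\{y_p:p\in\Por\}$, which is nonempty and has size $\le|\Por|<\theta$.

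Finally I would verify the defining implication. Suppose $p\Vdash x\sqsubset\dot{h}$ and let $G$ be generic with $q^p_m\in G$ for all $m$; then $p\in G$, so $V[G]\models x\sqsubset\dot{h}[G]=y_p$, and since $x,y_p\in V$ the Borel statement $x\sqsubset y_p$ is absolute between $V$ and $V[G]$, giving $x\sqsubset y_p$ with $y_p\in H$. Thus no $x$ that is $\Rbf$-unbounded over $H$ can be forced below $\dot{h}$, as required; taking $\Por$ of size $<\theta$ this shows $\Por$ is $\theta$-$\Rbf$-good, and since Cohen forcing has size $\aleph_0<\aleph_1$ it is $\Rbf$-good. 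I expect the main obstacle to be precisely the point that forced the detour through $\omega^\omega$: a naive ``possible value'' built as a nested intersection of basic open sets in the ambient Polish space $Z$ need only lie in $\overline{Y}$, not in $Y$, so the analytic parametrization $f$ is what guarantees $y_p\in Y$. A secondary technical point is the clean handling of the existential ``$\exists n$'' hidden in $x\sqsubset\dot{h}$, which is what makes the realization-by-a-generic argument (together with Borel absoluteness) more convenient than trying to decide a single $n$ by one condition.
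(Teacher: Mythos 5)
Your construction is exactly the right one --- the analytic parametrization $f\colon\omega^\omega\to Y$, the name $\dot{a}$ with $\dot{h}=f(\dot{a})$, the towers $q^p_0\geq q^p_1\geq\cdots$ deciding $\dot{a}\upharpoonright m$, and $H=\{y_p:p\in\Por\}$ with $y_p=f(a_p)$ --- but the verification has a fatal step: ``let $G$ be generic with $q^p_m\in G$ for all $m$''. Such a generic filter need not exist, and in the typical case it provably does not. If $\dot{a}$ names a new real (say $\Por$ is Cohen forcing and $\dot{a}$ is the generic real itself), then any filter containing the whole tower $\{q^p_m:m<\omega\}$ evaluates $\dot{a}$ to the ground-model real $a_p$, so it cannot be generic over $V$ (nor over any model containing $a_p$): genericity forces $\dot{a}[G]\notin V$. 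An infinite descending sequence of conditions generally has no lower bound, and one cannot prescribe that a generic filter pass through all of its members; your ``crucial feature'' is a statement about an empty collection of filters, so the absoluteness argument that follows never gets off the ground. A telltale sign of the gap is that your proof never uses the hypothesis that the relations $\sqsubset_n$ are \emph{closed} (you only invoke Borelness of $\sqsubset$), whereas this is precisely what the correct argument needs.

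The repair --- which is the actual proof of the cited Lemma 4 of \cite{mejia} --- replaces the imaginary generic filter by a convergence argument. Given $r\Vdash x\sqsubset\dot{h}$, first extend to $p\leq r$ and $n<\omega$ with $p\Vdash x\sqsubset_n\dot{h}$; this is possible since the $\sqsubset_n$ are increasing, and it is exactly the point where the hidden ``$\exists n$'' must be decided by a condition, contrary to your closing remark that this can be avoided. Now run your tower below this $p$: by continuity of $f$ at $a_p$, for every $\epsilon>0$ there is $m$ such that $f$ maps the basic clopen set determined by $a_p\upharpoonright m$ into the $\epsilon$-ball around $y_p$, so $q^p_m$ forces ``$x\sqsubset_n\dot{h}$ and $d(\dot{h},y_p)<\epsilon$''. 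Hence in a generic extension the closed set $\{z\in Z:x\sqsubset_n z\}$ meets $B(y_p,\epsilon)$, and by $\boldsymbol{\Sigma}^1_1$-absoluteness this nonemptiness reflects to $V$. Letting $\epsilon\to 0$ and using that $\{z\in Z:x\sqsubset_n z\}$ is closed, you get $x\sqsubset_n y_p$, i.e.\ $x\sqsubset y_p$ with $y_p\in H$, which is the contrapositive you wanted. So your $H$ is fine; what certifies it is the closedness of the pieces $\sqsubset_n$ together with deciding $n$, not a generic filter threading the tower.
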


\begin{lemma}\label{Cohen-unb}
  If $\theta$ is an uncountable regular cardinal, $\nu\geq\theta$ is a cardinal with $\cf(\nu)\geq\theta$ and $\mathds{P}_{\nu}=\langle\mathds{P}_{\alpha},\dot{\mathds{Q}}_{\alpha}\rangle_{\alpha<\nu}$ is a FS iteration where each $\dot{\mathds{Q}}_{\alpha}$ is forced (by $\Por_{\alpha}$) to be $\theta$-cc and non-trivial, then $\Por_{\nu}$ adds a strongly $\nu$-$\Rbf$-unbounded family of size $\nu$.
\end{lemma}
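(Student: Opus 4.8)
The plan is to produce the family as a set of Cohen reals appearing cofinally along the iteration, and to exploit the fact, built into clause (iii) of the definition of a Prs, that a Cohen real is $\Rbf$-unbounded over any model it is generic over. I would first record two auxiliary facts. Since $\theta$ is regular and each $\Qnm_\alpha$ is forced to be $\theta$-cc, the FS iteration $\Por_\nu$ is $\theta$-cc. And since $\cf(\nu)\geq\theta>\omega$, we have $\alpha+\omega<\nu$ for every $\alpha<\nu$, while in $V^{\Por_\alpha}$ the quotient $\Por_{\alpha+\omega}/\Por_\alpha$ is precisely the FS iteration of the nontrivial posets $\langle\Qnm_{\alpha+n}:n<\omega\rangle$.

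Next I would invoke the classical fact that a length-$\omega$ FS iteration of nontrivial posets adds a Cohen real over its ground model; this is the only place where nontriviality is used, since it is exactly what lets genericity at the limit stage $\omega$ (with finite supports) produce a $2^{<\omega}$-generic object. Because the category algebra (Borel modulo meager) of the perfect Polish space $X$ is isomorphic to the Cohen algebra, such a Cohen real canonically determines a point $c_\alpha\in X\cap V^{\Por_{\alpha+\omega}}$ that avoids every meager Borel subset of $X$ coded in $V^{\Por_\alpha}$. I set $F:=\{c_\alpha:\alpha<\nu\}$. To see $|F|=\nu$, note that if $c_\alpha=c_\beta$ with $\beta<\alpha$ then $c_\alpha\in V^{\Por_{\beta+\omega}}$, whereas $c_\alpha\notin V^{\Por_\alpha}$; hence $\alpha<\beta+\omega$, so each value is attained only on a countable set of indices. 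As $\nu$ is an uncountable cardinal, the map $\alpha\mapsto c_\alpha$ has $\nu$ distinct values.

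For the strong unboundedness I would fix $y\in Y$ in the final model $V^{\Por_\nu}$. A nice name for $y$ (a real) involves only $<\theta$ coordinates, being determined by a countable union of antichains each of size $<\theta$ with $\theta$ regular uncountable; by $\cf(\nu)\geq\theta$ these coordinates are bounded, so $y\in V^{\Por_\beta}$ for some $\beta<\nu$. By clause (iii), the set $(\sqsubset)^y=\bigcup_n(\sqsubset_n)^y$ is a meager $F_\sigma$ subset of $X$ coded in $V^{\Por_\beta}$. For every $\alpha\geq\beta$ the real $c_\alpha$ is Cohen over $V^{\Por_\alpha}\supseteq V^{\Por_\beta}$, so it avoids $(\sqsubset)^y$, i.e.\ $c_\alpha\not\sqsubset y$. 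Thus $\{x\in F:x\sqsubset y\}\subseteq\{c_\alpha:\alpha<\beta\}$, a set of size $\leq|\beta|<\nu$, which is what the definition of strongly $\nu$-$\Rbf$-unbounded demands.

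The main obstacle is the Cohen-real extraction: one must verify carefully that the length-$\omega$ tail iteration genuinely yields a Cohen real over $V^{\Por_\alpha}$ (and that nontriviality is exactly the hypothesis needed) and that its transfer to the category algebra of $X$ is legitimate. Once this is in place, the $\theta$-cc reflection of $y$ to a bounded stage $\beta<\nu$ and the meagerness of $(\sqsubset)^y$ make the remaining estimates routine.
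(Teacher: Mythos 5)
Your proof is correct and follows essentially the same route as the paper's: the paper's (one-line) proof takes the Cohen reals in $X$ added at limit stages of the FS iteration to be the strongly $\nu$-$\Rbf$-unbounded family, which is exactly your family $\{c_\alpha:\alpha<\nu\}$ extracted at the stages $\alpha+\omega$. Your write-up simply supplies the standard details the paper leaves implicit (the Cohen real from a length-$\omega$ FS tail, transfer to the category algebra of $X$, the $\theta$-cc/$\cf(\nu)\geq\theta$ reflection of $y\in Y$ to a bounded stage, and the nwd sections $(\sqsubset_n)^y$ from clause (iii)), all of which are sound.
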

\begin{proof}
  The Cohen reals (in $X$) added at the limit steps of the iteration forms a strongly
  $\nu$-$\Rbf$-unbounded family of size $\nu$.
\end{proof}

\begin{theorem}\label{FSpres}
  Let $\theta$ be an uncountable regular cardinal, $\delta\geq\theta$ an ordinal, and let $\mathds{P}_{\delta}=\langle\mathds{P}_{\alpha},\dot{\mathds{Q}}_{\alpha}\rangle_{\alpha<\delta}$ be a FS iteration such that, for each $\alpha<\delta$, $\Qnm_{\alpha}$ is a $\Por_{\alpha}$-name of a  non-trivial $\theta$-$\Rbf$-good $\theta$-cc poset. Then:
\begin{enumerate}[(a)]
    \item For any cardinal $\nu\in[\theta,\delta]$ with $\cf(\nu)\geq\theta$, $\Por_\nu$ adds a strongly $\nu$-$\Rbf$-unbounded family of size $\nu$ which is still strongly $\nu$-$\Rbf$-unbounded in the $\Por_\delta$-extension.
    \item For any cardinal $\lambda\in[\theta,\delta]$, $\Por_\lambda$ adds a $\lambda$-$\Rbf$-unbounded family of size $\lambda$ which is still $\lambda$-$\theta$-unbounded in the $\Por_\delta$-extension.
    \item $\mathds{P}_{\delta}$ forces that $\bfrak(\Rbf)\leq\theta$ and  $|\delta|\leq\dfrak(\Rbf)$.
\end{enumerate}
\end{theorem}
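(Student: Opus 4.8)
The plan is to deduce all three parts from the two preservation facts stated after Definition \ref{DefGood} (there labelled (a) and (b)), from Lemma \ref{Cohen-unb}, and from the observation recorded after those facts that any $\lambda$-$\Rbf$-unbounded family $F$ with $\lambda\geq2$ witnesses $\bfrak(\Rbf)\leq|F|$ and $\lambda\leq\dfrak(\Rbf)$. For (a), I would first invoke Lemma \ref{Cohen-unb}: since $\nu\in[\theta,\delta]$, $\cf(\nu)\geq\theta$, and every iterand is $\theta$-cc and non-trivial, $\Por_\nu$ adds a strongly $\nu$-$\Rbf$-unbounded family $F$ of size $\nu$ (the Cohen reals appearing at the limit stages below $\nu$), and this family lies in $V^{\Por_\nu}$. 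It then remains to carry $F$ through the tail. I would factor $\Por_\delta$ as $\Por_\nu\ast(\Por_\delta/\Por_\nu)$ and note that in $V^{\Por_\nu}$ the quotient $\Por_\delta/\Por_\nu$ is a FS iteration of the tail iterands $\la\Qnm_\alpha:\nu\leq\alpha<\delta\ra$, each of which is forced to be $\theta$-cc and $\theta$-$\Rbf$-good. By fact (b) the quotient is $\theta$-$\Rbf$-good, and it is $\theta$-cc. Applying the strong-preservation clause of fact (a) with $\lambda=\nu$ (legitimate as $\cf(\nu)\geq\theta$) shows that $F$ is still strongly $\nu$-$\Rbf$-unbounded in $V^{\Por_\delta}$.

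For (b), the regular case is immediate: if $\lambda$ is regular then $\cf(\lambda)=\lambda\geq\theta$, so (a) already produces a strongly $\lambda$-$\Rbf$-unbounded family, which by the remark after Definition \ref{ContextFS} is $\lambda$-$\Rbf$-unbounded both in $V^{\Por_\lambda}$ and in $V^{\Por_\delta}$. The real content is the singular case, where strong unboundedness no longer yields plain unboundedness. Here I would fix an increasing sequence $\la\lambda_i:i<\cf(\lambda)\ra$ of regular cardinals in $[\theta,\lambda)$ with supremum $\lambda$ (available because $\lambda>\theta$ is singular), and for each $i$ apply (a) to obtain a family $F_i$ added by $\Por_{\lambda_i}$ that is strongly $\lambda_i$-$\Rbf$-unbounded and remains so in $V^{\Por_\lambda}$ and in $V^{\Por_\delta}$. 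Setting $F=\bigcup_{i<\cf(\lambda)}F_i$ gives $F\in V^{\Por_\lambda}$ with $|F|=\lambda$. Given any $A\subseteq Y$ with $|A|<\lambda$, I would choose $i$ with $|A|<\lambda_i$ (possible since $\sup_i\lambda_i=\lambda$); as $F_i$ is strongly $\lambda_i$-$\Rbf$-unbounded and $\lambda_i$ is regular, it is $\lambda_i$-$\Rbf$-unbounded, so some $x\in F_i\subseteq F$ is $\Rbf$-unbounded over $A$. Hence $F$ is $\lambda$-$\Rbf$-unbounded, and the identical argument, using that each $F_i$ survives into $V^{\Por_\delta}$, gives the claimed preservation.

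For (c), I would apply the bounding observation twice. Taking $\lambda=\theta$ (regular, so (a) suffices) yields a $\theta$-$\Rbf$-unbounded family of size $\theta$ in $V^{\Por_\delta}$, whence $\bfrak(\Rbf)\leq\theta$. Taking $\lambda=|\delta|\in[\theta,\delta]$ and using (b) yields a $|\delta|$-$\Rbf$-unbounded family in $V^{\Por_\delta}$, whence $|\delta|\leq\dfrak(\Rbf)$.

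I expect the main obstacle to be the singular case of (b): one must assemble a cofinal sequence of strongly $\lambda_i$-$\Rbf$-unbounded families into a single $\lambda$-$\Rbf$-unbounded family and carry out the size and survival bookkeeping correctly. A secondary, standard but delicate point is the identification of $\Por_\delta/\Por_\nu$ in $V^{\Por_\nu}$ as a FS iteration of the tail iterands, which is exactly what makes fact (b) (and then fact (a)) applicable in the proof of (a).
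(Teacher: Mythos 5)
Your argument is correct and is essentially the proof that the paper itself outsources: the stated ``proof'' of Theorem \ref{FSpres} is only the citation to \cite[Thm.~4.15]{CM}, and the argument given there is exactly your scheme --- Lemma \ref{Cohen-unb} for the Cohen reals at limit stages, identification of $\Por_\delta/\Por_\nu$ in $V^{\Por_\nu}$ as a FS iteration of non-trivial $\theta$-cc $\theta$-$\Rbf$-good iterands so that facts (a) and (b) after Definition \ref{DefGood} apply, the union of cofinally many strongly $\lambda_i$-$\Rbf$-unbounded families for singular $\lambda$ in (b), and the remark after Definition \ref{ContextFS} (together with the understanding that ``$\lambda$-$\theta$-unbounded'' in (b) is a typo for ``$\lambda$-$\Rbf$-unbounded'') for (c). The only step requiring real care is the one you explicitly flagged: that the quotient of a FS iteration by an initial segment is, in the intermediate extension, forcing equivalent to a FS iteration of the reinterpreted tail iterands (preserving their $\theta$-cc-ness and $\theta$-$\Rbf$-goodness), which is a standard fact and is precisely what makes the goodness machinery applicable.
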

\begin{proof}
   See e.g. \cite[Thm. 4.15]{CM}.
\end{proof}

Throughout this subsection, fix $M\subseteq N$ transitive models of ZFC and a Prs $\Rbf=\la X,Y,\sqsubset\ra$ coded in $M$. Recall that $\Sor$ is a \emph{Suslin ccc poset} if it is a
$\boldsymbol{\Sigma}^1_1$ subset of some
Polish space and both its order and
incompatibility relations are $\boldsymbol{\Sigma}^1_1$. Note that if
$\Sor$ is coded in $M$ then $\Sor^M\lessdot_M\Sor^N$.

\begin{lemma}[{\cite[Thm. 7]{mejia}}]\label{Suslingoodparallel}
   Let $\Sor$ be a Suslin ccc poset coded in $M$. If $M\models``\Sor$ is $\Rbf$-good" then, in $N$, $\Sor^N$ forces that every real in $X^N$ which is $\Rbf$-unbounded over $M$ is $\Rbf$-unbounded over $M^{\Sor^M}$.
\end{lemma}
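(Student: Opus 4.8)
The plan is to reduce everything to a single absoluteness transfer between $M$ and $N$. Reading ``every real in $X^N$'' as ranging over the reals of $X$ that already lie in $N$, fix $w\in X$ with $w\in N$ that is $\Rbf$-unbounded over $M$. Let $G$ be $\Sor^N$-generic over $N$; since $\Sor^M\lessdot_M\Sor^N$ (as noted after the Suslin definition), $G_M:=G\cap\Sor^M$ is $\Sor^M$-generic over $M$ and $M[G_M]=M^{\Sor^M}\subseteq N[G]$. Every $y\in Y\cap M^{\Sor^M}$ is $\dot h^{G_M}=\dot h^{G}$ for some $\Sor^M$-name $\dot h\in M$, which by ccc I may take to be a nice name coded by a single real of $M$ and forced to name a point of $Y$. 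Hence it suffices to show, for each such $\dot h$, that $\Vdash_{\Sor^N}w\not\sqsubset\dot h$.

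First I would apply the hypothesis inside $M$. Since $M\models$``$\Sor$ is $\Rbf$-good'', goodness applied to the $\Sor^M$-name $\dot h$ produces a nonempty countable $H\subseteq Y$ with $H\in M$ such that
\[
  M\models\ \forall u\in X\,\bigl[\,u\text{ is }\Rbf\text{-unbounded over }H\ \rightarrow\ \Vdash_{\Sor^M}u\not\sqsubset\dot h\,\bigr].
\]
The core of the argument is that this implication is absolute upward to $N$. Here the Polish structure of Definition \ref{ContextFS} enters: writing $\sqsubset=\bigcup_n\sqsubset_n$ with each $\sqsubset_n$ closed, the hypothesis ``$u$ is $\Rbf$-unbounded over $H$'' is the arithmetical (hence absolute) statement $\forall z\in H\,\forall n\,\neg(u\sqsubset_n z)$ over the fixed countable $H$, while ``$\Vdash_{\Sor}u\not\sqsubset\dot h$'' unfolds as $\forall s\in\Sor\,\forall n\,\neg\bigl(s\Vdash_{\Sor}u\sqsubset_n\dot h\bigr)$. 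Because $\Sor$ is Suslin ccc with $\sig^1_1$ code in $M$ and $\dot h$ is coded by a real of $M$, the relation $s\Vdash_{\Sor}u\sqsubset_n\dot h$ for the closed set $\sqsubset_n$ is, by the absoluteness theory of Suslin ccc forcing, a projective relation of $(s,u)$ computed uniformly in any transitive model; in particular $\Vdash_{\Sor^M}$ and $\Vdash_{\Sor^N}$ agree on these statements for reals of $M$, and the whole implication is (equivalent to) a $\cosig^1_2$ formula with real parameters in $M$. Shoenfield absoluteness then gives
\[
  N\models\ \forall u\in X\,\bigl[\,u\text{ is }\Rbf\text{-unbounded over }H\ \rightarrow\ \Vdash_{\Sor^N}u\not\sqsubset\dot h\,\bigr].
\]

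Finally I would feed $w$ into this reflected statement. As $H\in M$ is countable, $H\subseteq Y\cap M$, so the $\Rbf$-unboundedness of $w$ over $M$ forces its $\Rbf$-unboundedness over $H$; the displayed $N$-statement applied to $u=w$ then yields $\Vdash_{\Sor^N}w\not\sqsubset\dot h$. Since $\dot h$ was an arbitrary name for a real of $Y\cap M^{\Sor^M}$, $\Sor^N$ forces $w\not\sqsubset y$ for every $y\in Y\cap M^{\Sor^M}$, i.e. $w$ is $\Rbf$-unbounded over $M^{\Sor^M}$, as required.

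I expect the genuine obstacle to be the transfer between the two displays: one must verify that the Suslin ccc forcing relation restricted to the closed relations $\sqsubset_n$ is projective and absolutely defined, so that $\Vdash_{\Sor^M}$ reinterprets correctly as $\Vdash_{\Sor^N}$ and the goodness-witnessing statement lands within the reach of Shoenfield absoluteness. The reduction to a nice name for $\dot h$, and the passage from the ``outer'' universal quantifier over ground-model reals to our particular $w\in N$, are then routine. Note that the nowhere-density clause of Definition \ref{ContextFS} plays no role here; only the closedness of the $\sqsubset_n$ and the Suslin ccc-ness of $\Sor$ are used.
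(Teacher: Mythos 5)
The paper gives no proof of this lemma---it is quoted verbatim from \cite[Thm.~7]{mejia}---and your argument is essentially the argument of that cited source: reduce to nice $\Sor^M$-names coded by reals of $M$, extract the countable witness $H$ from goodness applied in $M$, observe that the witnessing statement is $\Pi^1_2$ because the forcing relation of a Suslin ccc poset restricted to the closed relations $\sqsubset_n$ is projective (in fact $\Pi^1_1$, computed through the countable maximal antichains of the nice name, whose maximality transfers to $N$ exactly because $\Sor^M\lessdot_M\Sor^N$), and then transfer upward by Shoenfield and apply the transferred implication to the given unbounded real. The one point you leave implicit is that Shoenfield absoluteness between $M$ and $N$ needs $\omega_1^N\subseteq M$; this is harmless for the paper, since wherever the lemma is invoked $M$ and $N$ are ccc generic extensions of a common ground model and so share their ordinals, but it is worth stating, as for arbitrary transitive set models the $\Pi^1_2$ transfer step would not be justified.
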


Given a non-empty set $\Gamma$, denote the random algebra $\Bor_\Gamma:=\Bwf(2^{\Gamma\times\omega})/\Nwf(2^{\Gamma\times\omega})$ where $\Bwf(2^{\Gamma\times\omega})$ is the $\sigma$-algebra generated by sets of the form $[s]:=\{x\in2^{\Gamma\times\omega}:s\subseteq x\}$ for $s\in\Cor_\Gamma$, the class of finite partial functions from $\Gamma\times\omega$ into $2$, and $\Nwf(2^{\Gamma\times\omega})$ is the $\sigma$-ideal generated by the measure zero sets in $\Bwf(2^{\Gamma\times\omega})$ (with respect to the standard product measure). Note that $\Bor_\Gamma$ can be seen as the direct limit of posets of the form $\Bor_\Omega$ for $\Omega\subseteq\Gamma$ countable. Put $\Bor:=\Bor_\omega$ and $\Cor:=\Cor_\omega$.

\begin{corollary}\label{Presrandom}
   Let $\Gamma\in M$ be a non-empty set. If $M\models``\Bor_\Gamma$ is $\Rbf$-good" then $\Bor_\Gamma^N$, in $N$, forces that every real in $X^N$ which is $\Rbf$-unbounded over $M$ is $\Rbf$-unbounded over $M^{\Bor_\Gamma^M}$.
\end{corollary}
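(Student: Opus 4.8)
The plan is to deduce Corollary \ref{Presrandom} from Lemma \ref{Suslingoodparallel}, which already handles \emph{Suslin} ccc posets coded in $M$. The corollary statement is exactly the conclusion of that lemma applied to $\Bor_\Gamma$, so the only real work is to bridge the gap between the hypothesis of the lemma (a Suslin ccc poset) and the object $\Bor_\Gamma$, which for an arbitrary index set $\Gamma$ is not literally a Suslin forcing. Thus the strategy is a reduction to the countable case, where $\Bor_\Omega$ for countable $\Omega$ \emph{is} (naturally coded as) a Suslin ccc poset.

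First I would invoke the observation, recorded just before the corollary, that $\Bor_\Gamma$ is the direct limit of the subalgebras $\Bor_\Omega$ with $\Omega\subseteq\Gamma$ countable, and that this presentation is absolute enough to hold in both $M$ and $N$ with $\Bor_\Gamma^M\lessdot_M\Bor_\Gamma^N$. Next, given a $\Bor_\Gamma^N$-name $\dot{x}$ for a real in $X^N$ that is $\Rbf$-unbounded over $M$, I would use the ccc-ness together with the direct-limit structure to find a \emph{countable} $\Omega\subseteq\Gamma$ (lying in $M$, or at least coded there) such that the relevant name, and the witness to $\Rbf$-unboundedness over $M$, are captured by $\Bor_\Omega$; a real in $X$ is determined by countably much information, and every condition touches only finitely many coordinates, so a standard reflection/Lowenheim--Skolem argument produces such an $\Omega$. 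On this countable $\Omega$, the forcing $\Bor_\Omega$ is a genuine Suslin ccc poset coded in $M$, and $M\models``\Bor_\Gamma$ is $\Rbf$-good" passes down to $M\models``\Bor_\Omega$ is $\Rbf$-good" because $\Bor_\Omega$ completely embeds into $\Bor_\Gamma$ and goodness is inherited by complete subposets (as noted in the text just after Definition \ref{DefGood}).

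Having reduced to a countable $\Omega$, I would apply Lemma \ref{Suslingoodparallel} directly to the Suslin ccc poset $\Bor_\Omega$: it yields that $\Bor_\Omega^N$ forces every real in $X^N$ that is $\Rbf$-unbounded over $M$ to remain $\Rbf$-unbounded over $M^{\Bor_\Omega^M}$. The final step is to lift this back to $\Bor_\Gamma$. Since $M^{\Bor_\Omega^M}\subseteq M^{\Bor_\Gamma^M}$, being $\Rbf$-unbounded over the larger model $M^{\Bor_\Gamma^M}$ is a priori a stronger conclusion, so I must check that the extra generic information in $\Bor_\Gamma$ beyond $\Bor_\Omega$ does not spoil unboundedness; this is where the choice of $\Omega$ absorbing all the relevant data is used, ensuring the witness $\dot{x}$ and its unboundedness are genuinely decided by $\Bor_\Omega$ and unaffected by the remaining coordinates, which add only mutually generic reals over $\Omega$.

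The main obstacle I anticipate is precisely this last reconciliation between the countable reduction and the full algebra: one must argue carefully that ``$\Rbf$-unbounded over $M^{\Bor_\Gamma^M}$'' follows and not merely ``over $M^{\Bor_\Omega^M}$,'' which requires controlling the reals of $M^{\Bor_\Gamma^M}$ in terms of countable pieces. The clean way is to note that any real in $Y\cap M^{\Bor_\Gamma^M}$ is itself added by some countable sub-algebra $\Bor_{\Omega'}$, then enlarge $\Omega$ to contain $\Omega'$ (or quantify over all relevant countable pieces), so the genericity decomposes and the unboundedness witness survives against every potential dominating real from $M^{\Bor_\Gamma^M}$. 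Apart from this coordinate-bookkeeping, every ingredient is already available in the excerpt, so the proof should be short.
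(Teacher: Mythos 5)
Your proposal is correct and is essentially the proof the paper intends: the corollary is stated without proof precisely because it follows from Lemma \ref{Suslingoodparallel} combined with the remark that $\Bor_\Gamma$ is the direct limit of the subalgebras $\Bor_\Omega$ for countable $\Omega\subseteq\Gamma$ (each of which is Suslin ccc and inherits $\Rbf$-goodness as a complete subposet of $\Bor_\Gamma$). One small cleanup: the reals of $X^N$ in the statement live in $N$, so no name $\dot{x}$ is needed, and since no single countable $\Omega$ can absorb all of $Y\cap M^{\Bor_\Gamma^M}$, the argument should be run exactly as in your final paragraph --- for each $y\in Y\cap M^{\Bor_\Gamma^M}$ choose a countable $\Omega_y\in M$ with $y\in M^{\Bor_{\Omega_y}^M}$, apply the lemma to $\Bor_{\Omega_y}$, and conclude $x\not\sqsubset y$ by absoluteness of $\sqsubset$ between the intermediate extension and the full $\Bor_\Gamma^N$-extension.
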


\begin{lemma}[{\cite[Lemma 11]{VFJB11}}, see also {\cite[Lemma 5.13]{mejia-temp}}]\label{Fixedparallel}
   Assume $\Por\in M$ is a poset. Then, in $N$, $\Por$ forces that every real in $X^N$ which is $\Rbf$-unbounded over $M$ is $\Rbf$-unbounded over $M^{\Por}$.
\end{lemma}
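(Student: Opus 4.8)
The plan is to argue by contradiction, reducing the relation $\sqsubset$ to a single closed level $\sqsubset_n$ whose complement is a countable union of open rectangles; this is precisely what will let me transfer information about the name $\dot y$ from $N$ back to $M$. I would fix a real $x\in X^N$ that is $\Rbf$-unbounded over $M$ and suppose toward a contradiction that some $p\in\Por$ forces (over $N$) that $x$ fails to be $\Rbf$-unbounded over $M^\Por$. Since every real of $Y\cap M^\Por$ has a $\Por$-name in $M$, I may fix a $\Por$-name $\dot y\in M$ for a real in $Y$ with $p\Vdash\check x\sqsubset\dot y$. As $\sqsubset=\bigcup_{n<\omega}\sqsubset_n$ with the $\sqsubset_n$ increasing, $p\Vdash\exists n\,(\check x\sqsubset_n\dot y)$, so there are $q\leq p$ and $n<\omega$ with $q\Vdash\check x\sqsubset_n\dot y$ (and still $q\Vdash\dot y\in Y$).

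Next I would use that $\sqsubset_n$ is closed in $X\times Z$. Working in $M$, fix an enumeration $\la U_j\times W_j:j<\omega\ra$ of the basic open rectangles (products of basic open sets of $X$ and of $Z$) contained in the open complement $(X\times Z)\setminus\sqsubset_n$, so that for all $a,b$ one has $(a,b)\in\sqsubset_n$ iff $(a,b)\notin U_j\times W_j$ for every $j$. Put $J_x:=\{j:x\in U_j\}$, a set lying in $N$. Because $q\Vdash\check x\sqsubset_n\dot y$ and each fact $x\in U_j$ is absolute, for every $j\in J_x$ we get $q\Vdash\dot y\notin W_j$. The key observation is that the forcing statement $q\Vdash\dot y\notin W_j$ refers only to objects in $M$ (the poset $\Por$, the name $\dot y$, and the coded open set $W_j$), hence is absolute between $N$ and $M$. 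Therefore the set $T:=\{j:q\Vdash\dot y\notin W_j\}$ is computed identically in $M$ and in $N$, and $J_x\subseteq T$.

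I would then consider, in $M$, the set $A:=Y\setminus\bigcup_{j\in T}W_j$, which is analytic (an analytic set minus an open set) and coded in $M$. By construction and absoluteness of the forcing relation, $q\Vdash\dot y\in A$ both over $N$ and over $M$; so in any $M$-generic extension $M[G]$ with $q\in G$ the set $A$ is nonempty, since it contains $\dot y[G]$. The step I expect to be the crux is passing from ``$A$ nonempty in $M[G]$'' to ``$A$ nonempty in $M$'': here I would invoke Mostowski (Shoenfield) absoluteness, as nonemptiness of an analytic set is a $\boldsymbol{\Sigma}^1_1$ statement and is therefore absolute between the transitive models $M$ and $M[G]$, both of which contain a code for $A$. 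This yields a genuine point $y^*\in A\cap M$, that is, $y^*\in Y\cap M$ with $y^*\notin W_j$ for all $j\in T\supseteq J_x$.

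Finally I would check that $y^*$ produces the contradiction. For $j\in J_x$ we have $y^*\notin W_j$, and for $j\notin J_x$ we have $x\notin U_j$; in either case $(x,y^*)\notin U_j\times W_j$. Hence $(x,y^*)$ lies in the complement of $\bigcup_j U_j\times W_j$, i.e. $x\sqsubset_n y^*$, so $x\sqsubset y^*$ with $y^*\in Y\cap M$, contradicting that $x$ is $\Rbf$-unbounded over $M$. This finishes the argument, and it is worth noting that no chain condition on $\Por$ is used anywhere: the proof rests only on the closed structure of the $\sqsubset_n$ together with two absoluteness facts, namely the definability of the forcing relation and the absoluteness of nonemptiness of analytic sets.
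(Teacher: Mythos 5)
Your proof is correct, but it does not follow the route this paper relies on. The paper gives no proof of Lemma~\ref{Fixedparallel} at all: it cites \cite{VFJB11} and \cite{mejia-temp}, where the argument is a ``decided values'' construction. There one works inside $M$ and uses definability of the forcing relation to assemble, from conditions below $p$ that decide finite approximations of $\dot y$, an actual witness $y^*\in Y\cap M$ (for analytic $Y$ one decides approximations of a preimage under a continuous surjection of $\omega^\omega$ onto $Y$), and then applies the unboundedness of $x$ over $M$ to $y^*$ to produce a condition forcing $\neg(\check x\sqsubset_n\dot y)$. You replace that construction by an absoluteness extraction: the rectangles forbidden by $q$ determine the analytic set $A=Y\setminus\bigcup_{j\in T}W_j$ coded in $M$, which is nonempty in a generic extension of $M$ and hence, by Mostowski absoluteness, nonempty in $M$ itself. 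This is a genuinely different mechanism, and in one respect cleaner: membership of the witness in the analytic set $Y$ comes for free, which is precisely the point where the finite-approximation proof has to do extra work.

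One step deserves a more honest justification, although it is repairable by a standard argument. You assert that $q\Vdash\dot y\notin W_j$ ``is absolute between $N$ and $M$'' because its parameters lie in $M$; as a general principle this is false (for $A\in M$ uncountable in $M$ but countable in $N$, the statement ``$\check A$ is countable'' has all parameters in $M$, yet $\Vdash^M$ and $\Vdash^N$ can disagree on it). What is true, and what your sets $T$ and $A$ actually need, is the conjunction of three facts: (a) every $\Por$-generic filter over $N$ is $\Por$-generic over $M$, since dense subsets of $\Por$ lying in $M$ belong to $N$ and density is absolute; (b) the statements being forced --- ``$\dot y\in W_j$'', ``$\dot y\notin W_j$'', ``$\dot y\in Y$'' --- are themselves absolute between $M[G]$ and $N[G]$ (the first two are arithmetic in the codes, the third is $\boldsymbol{\Sigma}^1_1$ and hence Mostowski-absolute); and (c) in either model, conditions deciding a given statement are dense. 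From (a)--(c) one gets $q\Vdash^M\varphi\Leftrightarrow q\Vdash^N\varphi$ for exactly these $\varphi$, which yields $J_x\subseteq T$, the agreement of $T$ as computed in $M$ and in $N$, and $q\Vdash^M\dot y\in A$. A final cosmetic point: fullness over $N$ produces a name in $N$, so to replace it by a nice name lying in $M$ you must pass to some condition below $p$ deciding which $M$-name it equals; this is harmless, since your argument extends $p$ to a stronger condition $q$ anyway.
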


\begin{lemma}[Blass and Shelah {\cite{blassmatrix}}, {\cite[Lemmas 10, 12 and 13]{VFJB11}}]\label{parallellimits}
   Let $\sbf$ be a coherent pair of FS iterations as in Definition \ref{DefCoherent}(2). Then, $\Por_{i_0,\xi}\lessdot\Por_{i_1,\xi}$ for all $\xi\leq\pi$.

   Moreover, if $\dot{c}$ is a $\Por_{i_1,0}$-name of a real in $X$, $\pi$ is limit and $\Por_{i_1,\xi}$ forces that $\dot{c}$ is $\Rbf$-unbounded over $V_{i_0,\xi}$ for all $\xi<\pi$, then $\Por_{i_1,\pi}$ forces that $\dot{c}$ is $\Rbf$-unbounded over $V_{i_0,\pi}$.
\end{lemma}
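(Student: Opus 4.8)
For the first part, that $\Por_{i_0,\xi}\lessdot\Por_{i_1,\xi}$ for all $\xi\le\pi$, the plan is a straightforward induction on $\xi$ using the reduction criterion for complete embeddings (recall $\Por\lessdot\Qor$ iff $\Por$ is a suborder of $\Qor$ in which every $q\in\Qor$ admits a reduction, i.e.\ a $p\in\Por$ such that every $p'\le p$ in $\Por$ is compatible with $q$ in $\Qor$). The base case $\xi=0$ is Definition~\ref{DefCoherent}(II)(i). For the successor step I assume $\Por_{i_0,\xi}\lessdot\Por_{i_1,\xi}$; then clause (III) grants that $\Por_{i_1,\xi}$ forces $\Qnm_{i_0,\xi}\lessdot_{V^{\Por_{i_0,\xi}}}\Qnm_{i_1,\xi}$, and the desired $\Por_{i_0,\xi}\ast\Qnm_{i_0,\xi}\lessdot\Por_{i_1,\xi}\ast\Qnm_{i_1,\xi}$ is the standard two-step preservation of complete embeddings: given $(p',\dot q')$ in the right-hand two-step poset, one takes a reduction $p$ of $p'$ in $\Por_{i_0,\xi}$ and, below $p$, a $\Por_{i_0,\xi}$-name $\dot q$ for a reduction of $\dot q'$ in $\Qnm_{i_0,\xi}$ (available because $\Qnm_{i_0,\xi}\lessdot\Qnm_{i_1,\xi}$ is forced), and verifies that $(p,\dot q)$ reduces $(p',\dot q')$.

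For the limit step, with $\eta\le\pi$ limit and $\Por_{i_0,\eta},\Por_{i_1,\eta}$ the corresponding direct limits (Definition~\ref{DefCoherent}(II)(ii)), I exploit finite support. Any $q\in\Por_{i_1,\eta}$ already lies in some $\Por_{i_1,\zeta}$ with $\zeta<\eta$, so by the induction hypothesis it has a reduction $p\in\Por_{i_0,\zeta}\subseteq\Por_{i_0,\eta}$. I then claim $p$ reduces $q$ at level $\eta$: any $p'\le p$ in $\Por_{i_0,\eta}$ has support bounded by some $\zeta'\in[\zeta,\eta)$, so $p'\in\Por_{i_0,\zeta'}$, and, since a reduction persists upward along the coherent complete embeddings $\Por_{i_0,\zeta}\lessdot\Por_{i_0,\zeta'}$ and $\Por_{i_1,\zeta}\lessdot\Por_{i_1,\zeta'}$, the condition $p$ still reduces $q$ at level $\zeta'$; hence $p'$ is compatible with $q$ in $\Por_{i_1,\zeta'}\subseteq\Por_{i_1,\eta}$. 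Preservation of order and of incompatibility is immediate from the inclusions, which closes the induction.

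For the ``moreover'' part I first reduce to a statement about individual reals: since $\dot c$ is a $\Por_{i_1,0}$-name and each $\sqsubset_n$ is closed (so the predicate $x\sqsubset_n y$, and therefore $\neg(x\sqsubset y)$, is absolute among the transitive models $V_{i_1,\xi}\subseteq V_{i_1,\pi}$ in play), it suffices to show in $V_{i_1,\pi}$ that $c\not\sqsubset y$ for every $y\in Y\cap V_{i_0,\pi}$. When $\cf(\pi)>\omega$ this is immediate: by the ccc together with finite support, every real of $V_{i_0,\pi}$ already appears in some $V_{i_0,\xi}$ with $\xi<\pi$, and the hypothesis that $\dot c$ is $\Rbf$-unbounded over $V_{i_0,\xi}$ yields $c\not\sqsubset y$.

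The genuine content, and the step I expect to be the main obstacle, is the case $\cf(\pi)=\omega$, where $V_{i_0,\pi}$ may contain reals present at no bounded stage. Here I fix an increasing cofinal sequence $\langle\xi_k:k<\omega\rangle$ and argue by contradiction: if some $q\in\Por_{i_1,\pi}$ forced $\dot c\sqsubset\dot y$ for a $\Por_{i_0,\pi}$-name $\dot y$ for a real in $Y$, I would strengthen $q$ to fix an $n$ with $q\Vdash\dot c\sqsubset_n\dot y$ and to bound its support below some $\xi^*<\pi$. Using that $\sqsubset_n$ is closed I would rewrite $\dot c\sqsubset_n\dot y$ as a conjunction over $k$ of clopen conditions on the initial segments $\dot c\upharpoonright k$ and $\dot y\upharpoonright k$, read the successive initial segments of $\dot y$ at the cofinal stages $\xi_k$, and from these extract a witness living at a bounded stage that contradicts $\Rbf$-unboundedness there. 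The delicate point is exactly the control of the \emph{tail} of this approximation: a bounded-stage real agreeing with $\dot y$ only on an initial segment need not retain the relation $\sqsubset_n$ against $c$, so one must use the increasing closed structure $\sqsubset=\bigcup_n\sqsubset_n$ to trade longer agreement against larger indices $n$ in a diagonal fashion. This bookkeeping is precisely what is carried out in Blass--Shelah \cite{blassmatrix} and in \cite[Lemmas 10, 12 and 13]{VFJB11}, and I would follow their argument at this point.
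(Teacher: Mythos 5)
Your handling of the ``moreover'' part has a genuine gap, and it sits exactly where you say the ``genuine content'' is. For $\cf(\pi)=\omega$ you do not give a proof: you sketch an approximation scheme (reading initial segments of $\dot y$ at cofinal stages and diagonalizing against the closed relations $\sqsubset_n$), note yourself that controlling the tail is the obstacle, and then defer to Blass--Shelah and to \cite[Lemmas 10, 12, 13]{VFJB11} --- but those are precisely the results being proved here, so the deferral is circular. In addition, your case split is both unnecessary and, in the $\cf(\pi)>\omega$ case, illegitimate: the lemma does not assume the coherent pair is ccc (in Definition \ref{DefCoherent} the ccc is an \emph{additional} property, not part of being a coherent pair), so you cannot argue that every real of $V_{i_0,\pi}$ appears at a bounded stage. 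The argument that actually works is uniform in $\pi$ and never approximates $\dot y$: if $p\Vdash_{\Por_{i_1,\pi}}\dot c\sqsubset\dot y$, then by finite support $p\in\Por_{i_1,\xi}$ for some $\xi<\pi$; by the coherent FS structure the square of embeddings at stages $\xi$ and $\pi$ is \emph{correct}, so, working in $V_{i_1,\xi}$ below $p$, the quotient $\Ror$ of $\Por_{i_0,\pi}$ by the $\Por_{i_0,\xi}$-generic is a single poset which \emph{belongs to} $V_{i_0,\xi}$, the name $\dot y$ is reinterpreted as an $\Ror$-name, and $\Ror$ forces $c\sqsubset\dot y$ over $V_{i_1,\xi}$. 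Now apply Lemma \ref{Fixedparallel} with $M=V_{i_0,\xi}$, $N=V_{i_1,\xi}$, $\Por=\Ror$: since $c$ is $\Rbf$-unbounded over $V_{i_0,\xi}$ by hypothesis, $\Ror$ forces that $c$ is $\Rbf$-unbounded over $V_{i_0,\pi}$, a contradiction. Note that Lemma \ref{Fixedparallel} --- the one-step tool the paper sets up for exactly this purpose --- never appears in your proposal.

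There are also two flaws in the first part, though both are repairable. In the successor step you take ``below $p$, a $\Por_{i_0,\xi}$-name $\dot q$ for a reduction of $\dot q'$'', but no such name is available: clause (III) only gives $\Qnm_{i_0,\xi}\lessdot_{V^{\Por_{i_0,\xi}}}\Qnm_{i_1,\xi}$, which says that maximal antichains of $\Qnm_{i_0,\xi}$ \emph{lying in the intermediate extension} remain maximal; it does not yield reductions (the dense set witnessing failure of a reduction need not belong to $V^{\Por_{i_0,\xi}}$), and even when a reduction of $\dot q'$ exists, its choice depends on the $\Por_{i_1,\xi}$-generic, so it is a $\Por_{i_1,\xi}$-name and cannot serve as the second coordinate of a condition of $\Por_{i_0,\xi}\ast\Qnm_{i_0,\xi}$. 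The standard proof (\cite[Lemma 10]{VFJB11}) instead verifies directly that a maximal antichain of $\Por_{i_0,\xi}\ast\Qnm_{i_0,\xi}$ stays maximal. In the limit step, the claim that ``a reduction persists upward along the coherent complete embeddings'' is not a general fact about commuting squares of complete embeddings; it is true here only by the FS amalgamation argument: given $p'\leq p$ in $\Por_{i_0,\zeta'}$ and a witness $r\leq p'\frestr\zeta,q$ in $\Por_{i_1,\zeta}$, the condition obtained by concatenating $r$ with the tail $p'\frestr[\zeta,\zeta')$ lies in $\Por_{i_1,\zeta'}$ and is below both $p'$ and $q$, using that each $\Qnm_{i_0,\eta}$ is forced to be a subposet of $\Qnm_{i_1,\eta}$ with the inherited order. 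As written, this step --- which carries all the content of the limit case --- is an unproved assertion.
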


\subsection{Examples of preservation properties}\label{SubsecExm}

\newcommand{\Ed}{\mathbf{Ed}}
\newcommand{\Dbf}{\mathbf{D}}
\newcommand{\Edb}{\mathbf{Ed}_b}
\newcommand{\Lc}{\mathbf{Lc}}
\newcommand{\Sp}{\mathbf{Sp}}
\newcommand{\FSp}{\mathbf{Fp}}
\newcommand{\Lor}{\mathds{L}}
\newcommand{\Id}{\mathbf{Id}}

\begin{example}\label{Ed}
   Consider the Prs $\Ed:=\la\omega^\omega,\omega^\omega,\neq^*\ra$ where $x\neq^*y$ iff $x$ and $y$ are eventually different, that is, $x(i)\neq y(i)$ for all but finitely many $i<\omega$.
   By \cite[Thm. 2.4.1 and 2.4.7]{judabarto}, $\bfrak(\Ed)=\non(\Mwf)$ and $\dfrak(\Ed)=\cov(\Mwf)$.
\end{example}

\begin{example}\label{Dom}
  Let $\Dbf:=\la\omega^\omega,\omega^\omega,\leq^*\ra$ be the Prs where $x\leq^*y$ iff $x(i)\leq y(i)$ for all but finitely many $i<\omega$. Clearly, $\bfrak(\Dbf)=\bfrak$ and $\dfrak(\Dbf)=\dfrak$.

  Miller \cite{Mi} proved that $\Eor$, the standard $\sigma$-centered poset that adds an eventually different real in $\omega^\omega$, is $\Dbf$-good. Furthermore, $\omega^\omega$-bounding posets, like the random algebra, are $\Dbf$-good.
\end{example}

\begin{example}\label{Edb}
   Let $b:\omega\to\omega\menos\{0\}$ such that $b\not\leq^*1$ and let $\Edb:=\la\prod b,\prod b,\neq^*\ra$ be the Prs where $\prod b:=\prod_{i<\omega}b(i)$. If $\sum_{i<\omega}\frac{1}{b(i)}<+\infty$ then $\Edb$ is Tukey-Galois below $\la\Nwf(\prod b),\prod b,\not\ni\ra$ (for $x\in\prod b$ the set $\{y\in\prod b:\neg(x\neq^* y)\}$ has measure zero with respect to the standard Lebesgue measure on $\prod b$), so $\cov(\Nwf)\leq\bfrak(\Edb)$ and $\dfrak(\Edb)\leq\non(\Nwf)$.

   By a similar argument as in \cite[Lemma $1^*$]{Br-Cichon}, any $\nu$-centered poset is $\theta$-$\Edb$-good for any regular $\theta$ and $\nu<\theta$ infinite. In particular, $\sigma$-centered posets are $\Edb$-good.
\end{example}

\begin{example}\label{Loc}
   For each $k<\omega$ let $\id^k:\omega\to\omega$ such that $\id^k(i)=i^k$ for all $i<\omega$ and put $\Hwf:=\{\id^{k+1}:k<\omega\}$. Let $\Lc:=\la\omega^\omega,\Swf(\omega,\Hwf),\in^*\ra$ be the Prs where
   \[\Swf(\omega,\Hwf):=\{\varphi:\omega\to[\omega]^{<\aleph_0}:\exists{h\in\Hwf}\forall{i<\omega}(|\varphi(i)|\leq h(i))\},\]
   and $x\in^*\varphi$ iff $\exists{n<\omega}\forall{i\geq n}(x(i)\in \varphi(i))$, which is read \emph{$x$ is localized by $\varphi$}. As a consequence of Bartoszy\'nski's characterization of measure (see \cite[Thm. 2.3.9]{judabarto}), $\bfrak(\Lc)=\add(\Nwf)$ and $\dfrak(\Lc)=\cof(\Nwf)$.

   Any $\nu$-centered poset is $\theta$-$\Lc$-good for any regular $\theta$ and $\nu<\theta$ infinite (see \cite{jushe}) so, in particular, $\sigma$-centered posets are $\Lc$-good. Moreover, subalgebras (not necessarily complete) of random forcing are $\Lc$-good as a consequence of a result of Kamburelis \cite{kamburelis}.
\end{example}

\begin{example}\label{Split}
   For $a,b\in[\omega]^{\aleph_0}$ define $a\varpropto b$ iff either $b\subseteq^* a$ or $b\subseteq^*\omega\menos a$. It is clear that $\Sp:=\la[\omega]^{\aleph_0},[\omega]^{\aleph_0},\varpropto\ra$ is a Prs. Note that $a\not\varpropto b$ iff \emph{$a$ splits $b$}, that is, $a\cap b$ and $b\menos a$ are infinite, so the \emph{splitting number} $\sfrak=\bfrak(\Sp)$ and the \emph{reaping number} $\rfrak=\dfrak(\Sp)$.

   Baumgartner and Dordal \cite{baudor} (see also \cite[Main Lemma 3.8]{brendlebog}) proved that Hechler forcing $\Dor$ (for adding a dominating real) is $\Sp$-good. On the other hand, any poset that adds random reals is \underline{not} $\Sp$-good (see \cite[Subsect. 2.2]{mejia2}).
\end{example}

\newcommand{\IP}{\mathrm{IP}}

\begin{example}[{\cite[Example 2.19]{mejia2}}]\label{FinSplit}
   Let $\IP$ be the class of interval partitions on $\omega$.
   For $a\in[\omega]^{\aleph_0}$ and $\bar{J}\in\IP$ define $a\rhd\bar{J}$ iff either $\forall^\infty{k<\omega}(I_k\nsubseteq a)$ or $\forall^\infty{k<\omega}(I_k\nsubseteq \omega\menos a))$. The relation $a\ntriangleright\bar{J}$ is often read \emph{$a$ splits $\bar{J}$}. Clearly, $\FSp:=\la[\omega]^{\aleph_0},\IP,\rhd\ra$ is a Prs, $\bfrak(\FSp)=\max\{\bfrak,\sfrak\}$ and $\dfrak(\FSp)=\min\{\dfrak,\rfrak\}$ (see \cite{KWsplit}). The author proved in \cite[Lemma 2.20]{mejia2} that every $\Dbf$-good poset is $\FSp$-good. In particular, the random algebras and $\Eor$ are $\FSp$-good.
\end{example}

Now, we review Brendle's  and Fischer's \cite{VFJB11} technique for preserving mad families. Fix transitive models $M\subseteq N$ of ZFC.

\begin{definition}[{\cite[Def. 2]{VFJB11}}]\label{DefpresDiag}
   Let $A=\la a_z:z\in\Omega\ra\in M$ be a family of infinite subsets of $\omega$ and $a^*\in[\omega]^{\aleph_0}$ (not necessarily in $M$). Say that \emph{$a^*$ diagonalizes $M$ outside $A$} if, for all $h\in M$, $h:\omega\times[\Omega]^{<\aleph_0}\to\omega$ and for any $m<\omega$, there are $i\geq m$ and $F\in[\Omega]^{<\aleph_0}$ such that $[i,h(i,F))\menos\bigcup_{z\in F}a_z\subseteq a^*$.
\end{definition}

For $A\subseteq\Pwf(\omega)$, denote by $\Iwf(A)$ the ideal generated by $A\cup\mathrm{Fin}$.

\begin{lemma}[{\cite[Lemma 3]{VFJB11}}]\label{DiagMain}
   If $A\in M$ and $a^*$ diagonalizes $M$ outside $A$ then $|a^*\cap x|=\aleph_0$ for any $x\in M\menos\Iwf(A)$.
\end{lemma}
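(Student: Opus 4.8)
The plan is to feed a cleverly chosen function $h\in M$ into the diagonalization hypothesis, where $h$ is built from $x$ and $A$ so as to force witnesses of $a^*\cap x$ to appear.

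First I would reformulate the hypothesis $x\notin\Iwf(A)$ combinatorially. The members of $\Iwf(A)$ are exactly the subsets of sets of the form $\bigcup_{z\in F}a_z\cup s$ with $F\in[\Omega]^{<\aleph_0}$ and $s$ finite. Hence $x\notin\Iwf(A)$ is equivalent to saying that $x\menos\bigcup_{z\in F}a_z$ is infinite for \emph{every} $F\in[\Omega]^{<\aleph_0}$: if this set were finite for some $F$, then $x\subseteq\bigcup_{z\in F}a_z\cup s$ for a finite $s$, whence $x\in\Iwf(A)$, contrary to assumption.

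Next I would define, working inside $M$, the function $h:\omega\times[\Omega]^{<\aleph_0}\to\omega$ by
\[
 h(i,F):=k_{i,F}+1,\qquad k_{i,F}:=\min\Big\{k\in x\menos\textstyle\bigcup_{z\in F}a_z : k\geq i\Big\}.
\]
By the previous paragraph the set over which the minimum is taken is nonempty (indeed infinite) for every $i$ and every $F$, so $k_{i,F}$ is well defined and $h$ is total. Since $x,A\in M$, the function $h$ lies in $M$, so it is a legitimate input for the diagonalization hypothesis.

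Finally I would apply the assumption that $a^*$ diagonalizes $M$ outside $A$ to this $h$. Given an arbitrary $m<\omega$, it yields $i\geq m$ and $F\in[\Omega]^{<\aleph_0}$ with $[i,h(i,F))\menos\bigcup_{z\in F}a_z\subseteq a^*$. By construction $k_{i,F}\in[i,h(i,F))$, $k_{i,F}\in x$, and $k_{i,F}\notin\bigcup_{z\in F}a_z$, so $k_{i,F}$ lies in the left-hand set and therefore $k_{i,F}\in a^*\cap x$ with $k_{i,F}\geq m$. As $m$ was arbitrary, $a^*\cap x$ meets the tail $[m,\omega)$ for every $m$, hence is infinite. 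The only delicate point is the very first step: it is the reformulation of $x\notin\Iwf(A)$ that guarantees the defining minimum of $h$ is always attained, making $h$ a total function in $M$; once that is in place, the rest is a direct unwinding of Definition~\ref{DefpresDiag}.
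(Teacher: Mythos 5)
Your proof is correct and follows essentially the same route as the source: the paper itself gives no proof of this lemma (it is quoted from \cite[Lemma 3]{VFJB11}), and your argument --- reformulating $x\notin\Iwf(A)$ as ``$x\menos\bigcup_{z\in F}a_z$ is infinite for every finite $F$'' and then feeding the function $h(i,F)=k_{i,F}+1$ into Definition~\ref{DefpresDiag} to catch elements of $x$ inside $a^*$ --- is precisely the standard argument from that reference. No gaps: the well-definedness of $h$ in $M$ and the membership $k_{i,F}\in[i,h(i,F))\menos\bigcup_{z\in F}a_z\subseteq a^*$ are exactly the points that need checking, and you check both.
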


\begin{lemma}[{\cite[Lemma 12]{VFJB11}}]\label{limitMAD}
   Let $\sbf$ be a coherent pair of FS iterations, $\dot{A}$ a $\Por_{i_0,0}$-name of a family of infinite subsets of $\omega$ and $\dot{a}^*$ a $\Por_{i_1,0}$-name for an infinite subset of $\omega$ such that
   \[\Vdash_{\Por_{i_1,\xi}}\textrm{``$\dot{a}^*$ diagonalizes $V_{i_0,\xi}$ outside $\dot{A}$"}\]
   for all $\xi<\pi$. Then, $\Por_{i_0,\pi}\lessdot\Por_{i_1,\pi}$ and $\Vdash_{\Por_{i_1,\pi}}$ ``$\dot{a}^*$ diagonalizes $V_{i_0,\pi}$ outside $\dot{A}$".
\end{lemma}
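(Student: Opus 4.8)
The first assertion is immediate: the first part of Lemma~\ref{parallellimits}, applied with $\xi=\pi$, gives $\Por_{i_0,\pi}\lessdot\Por_{i_1,\pi}$, so that ``$\dot a^*$ diagonalizes $V_{i_0,\pi}$ outside $\dot A$'' is meaningful. For the second assertion, the useful structural point is that $\dot a^*$ is a $\Por_{i_1,0}$-name and $\dot A$ is a $\Por_{i_0,0}$-name, so $a^*$ and the family $\la a_z:z\in\Omega\ra$ are already fixed at the bottom of the two iterations; in every intermediate extension the witnessing relation $[i,h(i,F))\menos\bigcup_{z\in F}a_z\subseteq a^*$ depends only on the single value $h(i,F)$ and on these fixed sets. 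By the forcing theorem it therefore suffices to prove that for every $\Por_{i_0,\pi}$-name $\dot h$ for a function $\omega\times[\Omega]^{<\aleph_0}\to\omega$, every $m<\omega$ and every $p\in\Por_{i_1,\pi}$, there are $q\leq p$, $i\geq m$ and $F\in[\Omega]^{<\aleph_0}$ with $q\Vdash_{\Por_{i_1,\pi}}[i,\dot h(i,F))\menos\bigcup_{z\in F}\dot a_z\subseteq\dot a^*$.

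Assume $\pi$ is a limit ordinal, which is the case in which the lemma is applied. Since $\Por_{i_1,\pi}$ is the direct limit of $\la\Por_{i_1,\xi}:\xi<\pi\ra$, the condition $p$ has finite support, so $p\in\Por_{i_1,\xi}$ for some $\xi<\pi$. The plan is to feed a function lying in $V_{i_0,\xi'}$, for a suitable $\xi\leq\xi'<\pi$, into the stage-$\xi'$ hypothesis $\Vdash_{\Por_{i_1,\xi'}}$``$\dot a^*$ diagonalizes $V_{i_0,\xi'}$ outside $\dot A$'' and transfer the witness thus produced to $\dot h$. The transfer rests on a monotonicity property of the witnessing relation: if $[i,k)\menos\bigcup_{z\in F}a_z\subseteq a^*$ and $i\leq k'\leq k$, then $[i,k')\menos\bigcup_{z\in F}a_z\subseteq a^*$ as well, so for fixed $i$ and $F$ the relation holds exactly when $h(i,F)$ does not exceed the first point $\geq i$ lying outside $a^*\cup\bigcup_{z\in F}a_z$. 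Thus only one value of $\dot h$ is ever relevant to a given candidate witness, and that value can be decided by a condition of bounded support.

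The step I expect to be the main obstacle is precisely this limit passage, and it is genuinely delicate when $\cf(\pi)=\omega$: then $V_{i_0,\pi}$ contains reals $h$ that occur at no bounded stage, so one cannot simply write $V_{i_0,\pi}=\bigcup_{\xi<\pi}V_{i_0,\xi}$ and quote the hypothesis at a single stage; moreover $\dot h$ is controlled by the $i_0$-column while $a^*$ is controlled by the $i_1$-column, so the two data are entangled through $\Por_{i_0,\pi}\lessdot\Por_{i_1,\pi}$. This is the same difficulty that is dealt with, for the unboundedness relation, in the ``moreover'' part of Lemma~\ref{parallellimits} (following Blass and Shelah), and I would resolve it by the same finite-support reflection: since a witness requires only finitely much information about $h$ together with the stage-$0$ data $\dot a^*$ and $\dot A$, one decides the relevant values of $\dot h$ along bounded stages, applies the stage-$\xi'$ hypothesis at an appropriate $\xi'<\pi$ to locate $i\geq m$ and $F$, and uses the monotonicity above to conclude that the pair found is a witness for the original $\dot h$.
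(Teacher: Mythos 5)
Your handling of the first assertion (via Lemma~\ref{parallellimits}), your reduction to the density statement, and your decision to prove the lemma only for limit $\pi$ are all sound. (The restriction to limit $\pi$ is in fact necessary, not just convenient: the paper's statement omits it, but the cited \cite[Lemma 12]{VFJB11} includes it, and for successor $\pi$ the conclusion would require one-step preservation hypotheses as in Lemma~\ref{Diag-onestep} and is false for arbitrary coherent pairs --- e.g.\ Hechler iterands $\Qnm_{i_0,\xi_0}=\Dor^{V_{i_0,\xi_0}}$, $\Qnm_{i_1,\xi_0}=\Dor^{V_{i_1,\xi_0}}$ at a last step destroy diagonalization, since the Hechler real $d$ lands in $V_{i_0,\xi_0+1}$ while a slalom-type bound built from $a^*$ forces $[i,d(i))\nsubseteq a^*$ for all large $i$.) Note also that this paper gives no proof at all; the lemma is quoted from \cite{VFJB11}, so the comparison is with that argument.

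The gap sits exactly in the step that carries all the content, the limit passage, and the mechanism you describe would not survive being written out. To ``apply the stage-$\xi'$ hypothesis'' you must exhibit a \emph{total} function lying in $V_{i_0,\xi'}$ to which it is applied, and you never say what that function is; ``decide the relevant values of $\dot{h}$'' is circular, because which pair $(i,F)$ is relevant is only known \emph{after} the hypothesis has been applied, and the hypothesis cannot be applied before a whole function is in hand. The device that breaks this circle (Blass--Shelah, and the actual proof of \cite[Lemma 12]{VFJB11}) is: fix $\xi'<\pi$ with $p\in\Por_{i_1,\xi'}$, work in $V_{i_1,\xi'}$ below $p$, and define \emph{in $V_{i_0,\xi'}$} the function $h^*$ where $h^*(i,F)$ is a value of $\dot{h}(i,F)$ decided by some condition $r_{i,F}$ of the quotient $\Por_{i_0,\pi}/G_{i_0,\xi'}$; this $h^*$ lies in $V_{i_0,\xi'}$ because that quotient and its forcing relation do. The hypothesis at $\xi'$ applied to $h^*$ and $m$ yields $i\geq m$ and $F$ with $[i,h^*(i,F))\menos\bigcup_{z\in F}a_z\subseteq a^*$, and the witness is transferred to $\dot{h}$ \emph{not by monotonicity} but by amalgamating $p$ with $r_{i,F}$: one needs that $\Por_{i_0,\pi}/G_{i_0,\xi'}$ embeds completely into $\Por_{i_1,\pi}/G_{i_1,\xi'}$ (a fact about coherent pairs that must be invoked), so that $r_{i,F}$ still forces $\dot{h}(i,F)=h^*(i,F)$ in the larger quotient, and $r_{i,F}$ is compatible with $p$ precisely because it was chosen in the quotient over $G_{i_0,\xi'}$ rather than over $V$. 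The resulting $q$ forces the inclusion with the exact value, so monotonicity is never used. Your transfer, by contrast, needs $\dot{h}(i,F)\leq h'(i,F)$ to hold without any condition being folded into $q$, i.e.\ it needs the auxiliary function to dominate $\dot{h}$ pointwise; no function in any $V_{i_0,\xi'}$ can do that, since in every application in this paper the tail $\Por_{i_0,\pi}/\Por_{i_0,\xi'}$ adds dominating reals (Hechler, Laver). So your skeleton is the right one, but the proposal omits exactly the idea --- the quotient-decided auxiliary function and the complete embedding of quotients --- that turns it into a proof.
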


\begin{lemma}\label{Diag-onestep}
  Let $\Por\in M$ and $\Qor\in N$ be posets, $A\in M$. For each of the following cases, $N$ satisfies that $\Qor$ forces that any real in $[\omega]^{\aleph_0}\cap N$ that diagonalizes $M$ outside $A$ also diagonalizes $M^\Por$ outside $A$.
  \begin{enumerate}[(i)]
     \item ({\cite[Lemma 11]{VFJB11}}) $\Por=\Qor$.
     \item ({\cite[Lemmas 4.8, 4.10, Cor. 4.11]{FFMM}}) $\Por=\Sor^M$ and $\Qor=\Sor^N$ if $\Sor\in\{\Eor\}\cup\{\Bor_\Gamma:\Gamma\in M\}$.
  \end{enumerate}
\end{lemma}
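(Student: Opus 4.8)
The plan is to handle both cases through a single reduction and then separate, the second case carrying the real weight. In both cases $\Por\lessdot_M\Qor$: this is trivial in (i), and in (ii) it is the observation preceding Lemma~\ref{Suslingoodparallel} that a Suslin ccc poset coded in $M$ satisfies $\Sor^M\lessdot_M\Sor^N$. So if $H$ is $\Qor$-generic over $N$, then $G:=H\cap\Por$ (with $G=H$ in case (i)) is $\Por$-generic over $M$ and $M^\Por=M[G]\subseteq N[H]$. Fix $a^*\in[\omega]^{\aleph_0}\cap N$ diagonalizing $M$ outside $A=\la a_z:z\in\Omega\ra$, and put $h:=\dot{h}[G]$ for a $\Por$-name $\dot{h}\in M$ of a function $\omega\times[\Omega]^{<\aleph_0}\to\omega$. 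To show $a^*$ diagonalizes $M[G]$ outside $A$ in $N[H]$, it suffices, for each such $\dot{h}$ and each $m<\omega$, that $G$ provide $i\ge m$ and $F\in[\Omega]^{<\aleph_0}$ with $[i,h(i,F))\menos\bigcup_{z\in F}a_z\subseteq a^*$.

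For case (i), work in $N$ and set
\[
  D:=\{q\in\Por:\exists\, i\ge m\ \exists\, F\in[\Omega]^{<\aleph_0}\ \exists\, v<\omega\ \big(q\Vdash_\Por\dot{h}(i,F)=v\ \text{ and }\ [i,v)\menos\textstyle\bigcup_{z\in F}a_z\subseteq a^*\big)\}.
\]
The inclusion $[i,v)\menos\bigcup_{z\in F}a_z\subseteq a^*$ is a ground-model statement, not mentioning $G$. To see $D$ is dense below an arbitrary $p\in\Por$, work in $M$ and choose, for every $(i,F)$, a condition $q_{i,F}\le p$ and a value $H(i,F)<\omega$ with $q_{i,F}\Vdash_\Por\dot{h}(i,F)=H(i,F)$. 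Then $H\in M$, so since $a^*$ diagonalizes $M$ outside $A$ there are $i\ge m$ and $F$ with $[i,H(i,F))\menos\bigcup_{z\in F}a_z\subseteq a^*$, and the witness $q_{i,F}\le p$ lies in $D$. As $D\in N$ and $G=H$ is $\Qor$-generic over $N$, $G$ meets $D$, yielding the desired $i,F$ in $N[H]$. This reproduces \cite[Lemma~11]{VFJB11}.

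The density argument collapses in case (ii): the set $D$ is built from $a^*\in N\menos M$, so $D\notin M$; as a subset of $\Por=\Sor^M$ it is dense, but the restricted generic $G=H\cap\Sor^M$ meets only dense sets lying in $M$, while as a subset of $\Sor^N$ the set $D$ need not be dense, since its conditions were manufactured inside $M$ below $p\in\Sor^M$. The remedy, carried out in \cite[Lemmas 4.8, 4.10 and Cor.~4.11]{FFMM}, mirrors the goodness machinery of Definition~\ref{DefGood} and Lemma~\ref{Suslingoodparallel}: one isolates an in-$M$ ``goodness for diagonalization'' of the Suslin ccc poset $\Sor$—for every $\Sor$-name $\dot{h}$ there is, in the ground model, a countable family of functions such that diagonalizing over that family already forces diagonalization over $\dot{h}[G]$—and then transfers it across $\Sor^M\lessdot_M\Sor^N$ exactly as in Lemma~\ref{Suslingoodparallel}. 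Verifying this goodness is the crux: for $\Eor$ one reads the value $\dot{h}(i,F)$ off finite conditions by pure decision, reflecting the deciding data into $M$; for $\Bor_\Gamma$ one first reduces to countable $\Gamma$ (using that $\Bor_\Gamma$ is the direct limit of the $\Bor_\Omega$ with $\Omega$ countable and that $\dot{h}$ depends on countably many coordinates), and then approximates $\dot{h}$ by ground-model data on Boolean conditions of large measure, exploiting that random forcing is $\omega^\omega$-bounding. I expect this verification—the reflection of a name over $N$ down to $\Sor^M$-data in $M$ without the crutch of a dense set defined from $a^*$—to be the main obstacle, and precisely the point where the two forcings must be treated separately.
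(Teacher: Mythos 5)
The paper offers no proof of this lemma at all: it is stated as a compilation of cited results, case (i) being \cite[Lemma 11]{VFJB11} and case (ii) being \cite[Lemmas 4.8, 4.10, Cor.~4.11]{FFMM}, so your proposal has to be measured against those sources rather than against an in-paper argument. Your proof of case (i) is complete, correct, and is precisely the cited density argument: $D$ is defined in $N$ from $a^*$, its density below an arbitrary $p$ is checked inside $M$ by choosing deciding conditions $q_{i,F}$ and applying the diagonalization hypothesis to the resulting function $H\in M$, and genericity over $N$ (not merely over $M$) is exactly what lets the generic meet $D$. Your treatment of case (ii) defers, just as the paper does, to \cite{FFMM}, and your structural picture is faithful to those proofs: a ground-model-indexed family of bounding functions playing the role of goodness, transferred across $\Sor^M\lessdot_M\Sor^N$, with $\Bor_\Gamma$ first reduced to countable $\Gamma$ and the random case handled by a large-measure/bounding argument. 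One concrete correction, though: $\Eor$ has \emph{no} pure decision property---no condition with stem $t$ can decide, say, the value of the generic real at $|t|$, since every value outside the finitely many forbidden ones remains possible---so ``reading $\dot{h}(i,F)$ off finite conditions by pure decision'' is not how the $\Eor$ case runs. What that case actually uses is that any two $\Eor$-conditions with the same stem are compatible (hence all same-stem conditions deciding $\dot{h}(i,F)$ decide it the same way), combined with a compactness argument: from a maximal antichain in $M$ of deciding conditions below $(t,\varnothing)$ one extracts a \emph{finite} subfamily that is predense with respect to every condition with stem $t$ and side part of width at most $k$, in any outer model (such a side part excludes only $k$ values per position and only finitely many positions matter, so if no finite subfamily sufficed, K\"onig's lemma would yield a width-$k$ condition in $M$ incompatible with the whole antichain, contradicting maximality). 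The maximum of the finitely many values decided by that subfamily defines the bounding function $h'_{t,k}\in M$, and compatibility of the given external condition with one member of the subfamily finishes the argument; this is the same finite-predense-subfamily scheme that in the random case is achieved by measure, as you correctly describe. Since your case (ii) is explicitly a deferral rather than a claimed proof, this inaccuracy is not load-bearing, but it is the one point where your sketch diverges from the cited arguments.
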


For a filter base $F$ on $\omega$ that contains $\{\omega\menos n:n<\omega\}$, denote by $\Lor_F$ \emph{Laver forcing with $F$} and by $\Mor_F$ \emph{Mathias forcing with $F$}.

\begin{lemma}[{\cite[Crucial Lemma 7]{VFJB11}}]\label{mathias}
   In $M$, let $U$ be a non-principal ultrafilter on $\omega$ and $A\subseteq[\omega]^{\aleph_0}$. If $a^*\in[\omega]^{\aleph_0}\cap N$ diagonalizes $M$ outside $A$ then, in $N$, there is an ultrafilter $U'$ containing $U$ such that $\Mor_U\lessdot_M\Mor_{U'}$ and $\Mor_{U'}$ forces that $a^*$ diagonalizes $M^{\Mor_U}$ outside $A$.
\end{lemma}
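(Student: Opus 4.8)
The plan is to follow the pattern of \cite[Crucial Lemma 7]{VFJB11}, extracting $U'$ together with both of its required properties from a single recursion in $N$ steered by $a^*$. Recall that a condition of $\Mor_U$ is a pair $(s,B)$ with $s\in[\omega]^{<\aleph_0}$, $B\in U$ and $\max s<\min B$, ordered by letting $(s',B')\leq(s,B)$ mean $s\subseteq s'$, $s'\menos s\subseteq B$ and $B'\subseteq B$, and that two conditions $(s_1,B_1)$, $(s_2,B_2)$ are compatible precisely when $s_1\menos s_2\subseteq B_2$ and $s_2\menos s_1\subseteq B_1$ (a tail of $B_1\cap B_2$ automatically lies in any ultrafilter). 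Since $U\subseteq U'$, this makes $\Mor_U$ a suborder of $\Mor_{U'}$ on which compatibility and incompatibility are absolute; hence the whole content of $\Mor_U\lessdot_M\Mor_{U'}$ is that every maximal antichain $\mathcal{A}\in M$ of $\Mor_U$ stays maximal in $\Mor_{U'}$, i.e. that each $(t,C)\in\Mor_{U'}$ is compatible with some member of $\mathcal{A}$. Two objects must thus be produced: the ultrafilter $U'$ and the verification of the two clauses.

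First I would fix $U'$. The essential point is that $U'$ cannot be an arbitrary extension of $U$: the $\Mor_U$-generic adds a dominating real, so the new functions $h\in M^{\Mor_U}$ can outgrow every ground-model function, and only a choice of $U'$ tied to $a^*$ will keep the intervals $[i,h(i,F))$ absorbed into $a^*\cup\bigcup_{z\in F}a_z$. I would therefore build $U'$ by a recursion in $N$ that at each stage refines $U$ toward $a^*$ on the sets lying outside $\Iwf(A)$, using Lemma~\ref{DiagMain} (which gives $|a^*\cap x|=\aleph_0$ for every $x\in M\menos\Iwf(A)$) to preserve the finite intersection property; the outcome is an ultrafilter $U'\supseteq U$ of $N$ along which $a^*$ controls the $\Mor_U$-generic modulo $\Iwf(A)$.

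Next I would verify maximality. Fix $\mathcal{A}\in M$ and $(t,C)\in\Mor_{U'}$. Working in $M$, maximality of $\mathcal{A}$ turns the collection of finite stem-extensions of $t$ that reach below $\mathcal{A}$ into a front on the $U$-branching tree above $t$; the decision properties of Mathias forcing guided by $U$ let me thin a $U$-set so that membership in this front, and the corresponding $\mathcal{A}$-element, are read off the stem alone. The role of $a^*$ is then to steer: since $C\in U'$ reflects $a^*$ modulo $\Iwf(A)$ and $a^*$ diagonalizes $M$ outside $A$, the front must possess a node all of whose new points lie in $C$, and that node supplies a member of $\mathcal{A}$ compatible with $(t,C)$, as required.

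Finally, for the forcing clause I would show that for every $\Mor_U$-name $\dot{h}\in M$ for a function $\omega\times[\Omega]^{<\aleph_0}\to\omega$ and every $m<\omega$ the conditions forcing one instance of diagonalization are dense below any $(t,C)\in\Mor_{U'}$. Using the same front/decision analysis of $\dot{h}$ inside $M$, I would produce a genuine function $H\in M$ bounding the values $\dot{h}(i,F)$ read along a suitably uniform front; applying to $H$ and $m$ that $a^*$ diagonalizes $M$ outside $A$ yields $i\geq m$ and $F$ with $[i,H(i,F))\menos\bigcup_{z\in F}a_z\subseteq a^*$, and intersecting the chosen front node's $U$-set with $C$ gives $(t',C')\leq(t,C)$ in $\Mor_{U'}$ forcing $\dot{h}(i,F)\leq H(i,F)$, hence $[i,\dot{h}(i,F))\menos\bigcup_{z\in F}a_z\subseteq a^*$. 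Since $\Mor_U\lessdot_M\Mor_{U'}$, these computations agree with those in $M^{\Mor_U}$, which gives the conclusion. The hard part will be the simultaneous construction of $U'$: rendering the front uniform (so that $H$ is a well-defined member of $M$), steering stems into an \emph{arbitrary} $C\in U'$, and absorbing the dominating generic must all be arranged by one recursion matched to $a^*$; it is exactly this interaction between the ultrafilter recursion and Lemma~\ref{DiagMain} that has to be controlled with care, and that distinguishes this case from the $\sigma$-centered cases already covered by Lemma~\ref{Diag-onestep}.
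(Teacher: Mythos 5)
The paper itself gives no proof of Lemma \ref{mathias}: it quotes \cite[Crucial Lemma 7]{VFJB11}, so your attempt must be measured against that argument. Your opening reduction is fine (compatibility of ultrafilter--Mathias conditions is absolute, so $\Mor_U\lessdot_M\Mor_{U'}$ amounts to maximal antichains from $M$ staying maximal), and the global architecture --- a recursion in $N$ building $U'\supseteq U$ while handling each maximal antichain in $M$ and each name $\dot h\in M$ --- is the right one. The first genuine gap is the tool you use to discharge the requirements: ``the decision properties of Mathias forcing guided by $U$'' do not exist for an arbitrary ultrafilter. Pure decision for $\Mor_U$ characterizes a special class of ultrafilters (the Canjar ultrafilters, equivalently strong P-points, by results of Hru\v{s}\'ak--Minami and Blass--Hru\v{s}\'ak--Verner), and non-Canjar ultrafilters exist in ZFC; this is in contrast with Laver--Prikry forcing $\Lor_U$, which does have pure decision for every ultrafilter, while the present lemma must work for every $U$. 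Consequently the ``front'' picture --- thinning to one $U$-set on which membership in the dense set is read off stems, so that every branch through an arbitrary $C\in U'$ hits a good stem --- is a selectivity (Ramsey-type) property that a general $U$ fails. All that survives is the weak $\sigma$-centeredness fact that two conditions with the same stem are compatible, hence a fixed stem decides at most one value of $\dot h(i,F)$; any correct proof has to work from this alone.

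The second gap is the step you yourself flag as ``the hard part,'' which is in fact the entire content of the lemma. To use the hypothesis on $a^*$ you must produce a function $H\in M$, defined on all pairs $(i,F)$, \emph{before} the diagonalization property hands you a pair; but which stems can be steered into an arbitrary $C\in U'$ (a set that need not lie in $M$) is not determined by any $M$-recipe, and the decided values of $\dot h(i,F)$ over all potentially reachable stems form an infinite set, so ``bounding the values read along a suitably uniform front'' is exactly the circularity that must be broken --- in \cite{VFJB11} this is done inside $M$ via a well-foundedness (rank) analysis of trees of stems obeying functions $f\colon[\omega]^{<\aleph_0}\to U$, combined with a one-step lemma showing that the filter extension keeps the finite intersection property, whose proof is where the diagonalization hypothesis and Lemma \ref{DiagMain} actually enter. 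Relatedly, your description of $U'$ (``refine $U$ toward $a^*$ on sets outside $\Iwf(A)$'') has no content yet: in general $a^*$ itself cannot be placed in $U'$, since nothing prevents $U$ from containing some $a_z\in A$ while $a^*\cap a_z$ is finite, so the sets adjoined at each stage must be auxiliary sets manufactured from $a^*$ and the requirement being handled, not $a^*$ or its traces. As written, the proposal correctly lists what must be arranged but supplies tools that are either unavailable (pure decision, fronts) or circular, so the proof of the cited lemma is not recoverable from it.
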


\newcommand{\Hor}{\mathds{H}}

Hechler \cite{Hechlermad} defined an $\omega_1$-precaliber poset $\Hor_\Gamma$ for a set $\Gamma$ that adds an a.d. family $A^*(\Gamma)=\{a^*_i:i\in\Gamma\}$. When $\Gamma$ is uncountable, $A^*(\Gamma)$ is forced to be a mad family. If $\Omega\subseteq\Omega'$ then $\Hor_\Omega\lessdot\Hor_{\Omega'}$.

\begin{lemma}[{\cite[Lemma 4]{VFJB11}}]\label{HechlerMad}
   Let $\Omega$ be a set, $z^*\in\Omega$ and $A:=\{a_z:z\in\Omega\}$ the a.d. family added by $\Hor_\Omega$. Then, $\Hor_\Omega$ forces that $a_{z^*}$ diagonalizes $V^{\Hor_{\Omega\menos\{z^*\}}}$ outside $A\frestr(\Omega\menos\{z^*\})$
\end{lemma}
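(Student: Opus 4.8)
The plan is to prove this by a density argument in $\Hor_\Omega$, exploiting that $\Hor_{\Omega\menos\{z^*\}}\lessdot\Hor_\Omega$ with projection the restriction map $p\mapsto p\frestr(\Omega\menos\{z^*\})$. Recall that a condition $p\in\Hor_\Omega$ is (essentially) given by a finite $u_p\in[\Omega]^{<\aleph_0}$, a height $n_p<\omega$, and finite approximations $s_p(z)\subseteq n_p$ of $a_z$ for $z\in u_p$, where $q\leq p$ demands $u_q\supseteq u_p$, $n_q\geq n_p$, $s_q(z)\cap n_p=s_p(z)$ for $z\in u_p$, and $s_q(z)\cap s_q(z')\subseteq n_p$ for distinct $z,z'\in u_p$ (so no common elements beyond $n_p$ are added to pairs already active in $p$). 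Fixing a $\Hor_{\Omega\menos\{z^*\}}$-name $\dot h$ for a function $\omega\times[\Omega\menos\{z^*\}]^{<\aleph_0}\to\omega$ and an $m<\omega$, by Definition \ref{DefpresDiag} it suffices to show that the set of $q\in\Hor_\Omega$ forcing ``there are $i\geq m$ and $F$ with $[i,\dot h(i,F))\menos\bigcup_{z\in F}a_z\subseteq a_{z^*}$'' is dense; genericity then yields the statement for every $h$ in $V^{\Hor_{\Omega\menos\{z^*\}}}$ and every $m$ simultaneously.

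So let $p\in\Hor_\Omega$, and extend it first so that $z^*\in u_p$ and $n_p\geq m$ (raising the height and adding nothing new). Put $i:=n_p$ and $F:=u_p\menos\{z^*\}$. The crucial point is that the value $\dot h(i,F)$ is computed from the $\Hor_{\Omega\menos\{z^*\}}$-generic, so I first work below $p':=p\frestr(\Omega\menos\{z^*\})$ inside $\Hor_{\Omega\menos\{z^*\}}$: choose $p''\leq p'$ deciding $\dot h(i,F)=j$ for some $j<\omega$ and, extending further without adding new elements, arrange $n_{p''}\geq j$. Then define $q\in\Hor_\Omega$ by $u_q:=u_{p''}\cup\{z^*\}$, $n_q:=n_{p''}$, $s_q(z):=s_{p''}(z)$ for $z\neq z^*$, and
\[
   s_q(z^*):=s_p(z^*)\cup\Big([i,j)\menos\textstyle\bigcup_{z\in F}s_{p''}(z)\Big).
\]
Since $q\frestr(\Omega\menos\{z^*\})=p''$ and $\dot h$ is a name in the smaller poset, $q$ forces $\dot h(i,F)=j$, and it forces $[i,j)\menos\bigcup_{z\in F}a_z\subseteq a_{z^*}$ because $[i,j)\menos\bigcup_{z\in F}a_z\subseteq[i,j)\menos\bigcup_{z\in F}s_{p''}(z)\subseteq s_q(z^*)\subseteq a_{z^*}$.

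The main thing to verify is that $q$ is a legitimate condition below $p$, i.e.\ that the elements newly placed into $a_{z^*}$ do not violate the almost-disjointness side condition with the already-active indices. For distinct $z,z'\in F$ this is inherited from $p''\leq p'$; the only genuinely new pairs are $(z^*,z)$ with $z\in F$, and here the added elements $[i,j)\menos\bigcup_{w\in F}s_{p''}(w)$ are by design disjoint from $s_{p''}(z)=s_q(z)$, whence $s_q(z^*)\cap s_q(z)=s_p(z^*)\cap s_p(z)\subseteq n_p$, exactly as required for $q\leq p$. (Pairs $(z^*,w)$ with $w\in u_{p''}\menos u_p$ are unconstrained, since $w\notin u_p$.) This is precisely why the definition of diagonalization subtracts $\bigcup_{z\in F}a_z$, and why one is forced to decide the finite traces $s_{p''}(z)\cap[i,j)$ before enlarging the $z^*$-column: the freedom to dump the remainder of $[i,j)$ into $a_{z^*}$ comes exactly from $z^*$ lying outside the index set of the smaller poset. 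The expected main obstacle is this bookkeeping of the side conditions; the rest is routine genericity.
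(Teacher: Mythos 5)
Your proposal is correct: the paper itself gives no proof of this lemma (it quotes it from \cite[Lemma 4]{VFJB11}), and your density-plus-amalgamation argument---deciding $\dot h(i,F)=j$ in $\Hor_{\Omega\menos\{z^*\}}$ below the restricted condition, then dumping $[i,j)\menos\bigcup_{z\in F}s_{p''}(z)$ into the $z^*$-column of the amalgamated condition---is exactly the standard proof from that reference. The side-condition bookkeeping you verify (only pairs already active in $p$ are constrained by $q\leq p$, and the newly added elements of $s_q(z^*)$ avoid every $s_{p''}(z)$ for $z\in F$ by construction) is indeed the heart of the matter and is handled correctly.
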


We finish this section with a result related to the groupwise-density number $\gfrak$.

\begin{lemma}[Blass {\cite[Thm. 2]{blass-super}}, see also {\cite[Lemma 1.17]{BrHejnice}}]\label{lemmagfrak}
   Let $\theta$ be an uncountable regular cardinal and $\langle V_\alpha\rangle_{\alpha\leq\theta}$ an increasing sequence of transitive models of ZFC such that
   \begin{enumerate}[(i)]
      \item $[\omega]^{\aleph_0}\cap(V_{\alpha+1}\menos V_{\alpha})\neq\varnothing$,
      \item $\langle[\omega]^{\aleph_0}\cap V_\alpha\rangle_{\alpha<\theta}\in V_\theta$ and
      \item $[\omega]^{\aleph_0}\cap V_\theta=\bigcup_{\alpha<\theta}[\omega]^{\aleph_0}\cap V_\alpha$.
   \end{enumerate}
   Then, in $V_\theta$, $\gfrak\leq\theta$.
\end{lemma}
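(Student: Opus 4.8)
The goal is to exhibit, inside $V_\theta$, a family $\langle\mathcal{G}_\alpha:\alpha<\theta\rangle$ of groupwise dense subsets of $[\omega]^{\aleph_0}$ with empty intersection; this is exactly what $\gfrak\le\theta$ asks for. All of the construction takes place in $V_\theta$: by (ii) the sequence $\langle[\omega]^{\aleph_0}\cap V_\alpha\rangle_{\alpha<\theta}$ is available there, and by (iii) every $A\in[\omega]^{\aleph_0}\cap V_\theta$ appears at some stage, so the rank $\rho(A):=\min\{\beta<\theta:A\in V_\beta\}$ is well defined.

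The families I would use are
\[
  \mathcal{G}_\alpha:=\{A\in[\omega]^{\aleph_0}:\text{no } B\in[\omega]^{\aleph_0}\cap V_\alpha \text{ satisfies } B\subseteq^* A\},
\]
i.e. the sets having no infinite almost-subset coming from level $\alpha$. Downward $\subseteq^*$-closure is immediate, since $A'\subseteq^* A$ and $B\subseteq^* A'$ give $B\subseteq^* A$. Emptiness of the intersection is equally quick and is where (iii) enters: for any $A$, putting $\beta=\rho(A)$ we have $A\in[\omega]^{\aleph_0}\cap V_\beta$ and $A\subseteq^* A$, so $A\notin\mathcal{G}_\beta$; hence $\bigcap_{\alpha<\theta}\mathcal{G}_\alpha=\varnothing$.

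The real work is groupwise density. Fix $\alpha$ and an interval partition $\langle I_n\rangle$; by (iii) choose $\delta\ge\alpha$ with $\langle I_n\rangle\in V_\delta$. I want an infinite $G\subseteq\omega$ with $A:=\bigcup_{n\in G}I_n\in\mathcal{G}_\alpha$. The key reduction is that if $B\in[\omega]^{\aleph_0}\cap V_\alpha$ (so $B\in V_\delta$) satisfies $B\subseteq^* A$, then, since the blocks are finite and pairwise disjoint, the index set $M_B:=\{n:I_n\cap B\neq\varnothing\}$ is an infinite member of $V_\delta$ with $M_B\subseteq^* G$. Thus it suffices to produce $G$ with no infinite $V_\delta$-subset, equivalently a set whose complement meets every infinite member of $V_\delta$. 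This is precisely the step that consumes hypothesis (i): the new real furnished in $V_{\delta+1}\setminus V_\delta$ must be converted into such a $G$.

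This last point is the main obstacle, and the only one requiring more than bookkeeping. In the applications of interest, namely FS iterations in which the reals added cofinally are Cohen reals, it is immediate: a Cohen real over $V_\delta$ (read as a set $G\subseteq\omega$ of block indices) splits every infinite set of $V_\delta$, so neither it nor its complement almost-contains any infinite $V_\delta$-set; taking $A=\bigcup_{n\in G}I_n$ then lands in $\mathcal{G}_\alpha$ and is a coarsening of $\langle I_n\rangle$, as required. Extracting such a splitting (``co-large'') coarsening from the newness hypothesis in general is the heart of Blass's argument; once it is in hand, the items above assemble to give groupwise dense $\mathcal{G}_\alpha$ with empty intersection, hence $\gfrak\le\theta$ in $V_\theta$.
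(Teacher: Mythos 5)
Your scaffolding is exactly that of the standard proof of this result (the paper itself gives no proof at all --- it quotes the lemma from Blass and from Brendle's notes, so the comparison here is with those cited arguments): the choice of the families $\mathcal{G}_\alpha=\{A\in[\omega]^{\aleph_0}:\text{no infinite }B\in V_\alpha\text{ satisfies }B\subseteq^* A\}$, the verification of downward $\subseteq^*$-closure, the use of (ii) to have the sequence available in $V_\theta$ and of (iii) to get $\bigcap_{\alpha<\theta}\mathcal{G}_\alpha=\varnothing$, and the reduction of groupwise density --- via $M_B=\{n:I_n\cap B\neq\varnothing\}\in V_\delta$ --- to producing an infinite $G$ with no infinite $V_\delta$-set almost contained in it, are all correct and are precisely what the cited proofs do.

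However, as a proof of the lemma \emph{as stated} there is a genuine gap, and you flagged it yourself: the step converting hypothesis (i), the mere existence of some new $y\in[\omega]^{\aleph_0}\cap(V_{\delta+1}\menos V_\delta)$, into an infinite $G$ such that no infinite $B\in V_\delta$ satisfies $B\subseteq^* G$. You verify this only when the new real is Cohen over $V_\delta$, but the hypothesis gives only newness, and this generality is the actual content of Blass's theorem (it is also what makes the lemma robust enough for the matrix constructions in this paper, where the successor-stage reals need not be Cohen). The missing step has a short self-contained proof you could have supplied: view $y$ as its characteristic function, an element of $2^\omega\menos V_\delta$, fix a bijection $\#:2^{<\omega}\to\omega$ in $V_\delta$, and let $G:=\{\#(y\upharpoonright k):k<\omega\}$, which is infinite. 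If some infinite $B\in V_\delta$ had $B\subseteq^* G$, then $S:=\{\#^{-1}(b):b\in B\}$ would be an infinite member of $V_\delta$ all but finitely many of whose elements are initial segments of $y$, necessarily of unbounded length. Then $S':=\{\sigma\in S:\sigma\text{ is }\subseteq\text{-comparable with all but finitely many members of }S\}$ belongs to $V_\delta$ and consists exactly of the initial segments of $y$ lying in $S$ (any $\sigma\in S'$ is comparable with arbitrarily long initial segments of $y$, hence is one; conversely each initial segment of $y$ in $S$ is comparable with all others, which are cofinitely many in $S$). Hence $y=\bigcup S'\in V_\delta$, a contradiction. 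With this in hand, your reduction closes the argument; but as submitted, the combinatorial heart of the lemma is deferred to ``Blass's argument'' rather than proved.
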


\section{Applications}\label{SecAppl}

For ordinals $\gamma$ and $\delta$, fix the following ccc 2D-coherent systems.
\begin{enumerate}[(1)]

\item The system $\matit(\gamma,\delta)$ where

\begin{enumerate}[(i)]
   \item $I^{\matit(\gamma,\delta)}=\gamma+1$, $\pi^{\matit(\gamma,\delta)}=\delta$,
   \item $\Por^{\matit(\gamma,\delta)}_{\alpha,0}=\Cor_\alpha$ for each $\alpha\leq\gamma$, and
   \item $\Qnm^{\matit(\gamma,\delta)}_{\alpha,\beta}=\dot{\Cor}$ for all $\beta<\delta$.
\end{enumerate}
Note that $\Por^{\matit(\gamma,\delta)}_{\alpha,\beta}\simeq\Cor_{\alpha\times\beta}$ for all $\alpha\leq\gamma$ and $\beta\leq\delta$. For a perfect Polish space $X$, fix $\dot{c}_{X,\alpha}$ a $\Por^{\matit(\gamma,\delta)}_{\alpha+1,0}$-name of a Cohen real in $X$ over $V_{\alpha,0}$, and $\dot{c}_X^\beta$ a $\Por^{\matit(\gamma,\delta)}_{0,\beta+1}$-name of a Cohen real in $X$ over $V_{0,\beta}$ (recall that, for each $\alpha\leq\gamma$, $\Por^{\matit(\gamma,\delta)}_{\alpha,\beta+1}$ forces that $\dot{c}^\beta_X$ is Cohen over $V_{\alpha,\beta}$). Thus, for any Prs $\Rbf=\la X,Y,\sqsubset\ra$, $\Por^{\matit(\gamma,\delta)}_{\alpha+1,0}$ forces that $\dot{c}_{X,\alpha}$ is $\Rbf$-unbounded over $V_{\alpha,0}$ and, by the results of Subsection \ref{SubsecPres}, $\Por^{\matit(\gamma,\delta)}_{\alpha+1,\delta}$ forces that $\dot{c}_{X,\alpha}$ is $\Rbf$-unbounded over $V_{\alpha,\delta}$. Therefore, if $\gamma$ is a cardinal of uncountable cofinality then $\Por^{\matit(\gamma,\delta)}_{\gamma,\delta}$ forces that $\{\dot{c}_{X,\alpha}:\alpha<\gamma\}$ is strongly $\gamma$-$\Rbf$-unbounded.

\item The system $\matit^*(\gamma,\delta)$ where
\begin{enumerate}[(i)]
   \item $I^{\matit^*(\gamma,\delta)}=\gamma+1$, $\pi^{\matit(\gamma,\delta)}=\delta$,
   \item $\Por^{\matit^*(\gamma,\delta)}_{\alpha,0}=\Hor_\alpha$ for each $\alpha\leq\gamma$, and
   \item $\Qnm^{\matit(\gamma,\delta)}_{\alpha,\beta}=\dot{\Cor}$ for all $\beta<\delta$.
\end{enumerate}
Note that $\Por^{\matit^*(\gamma,\delta)}_{\alpha,\beta}\simeq\Hor_\alpha\times\Cor_{\beta}$ for all $\alpha\leq\gamma$ and $\beta\leq\delta$. Let $\dot{A}^*(\gamma)=\{\dot{a}^*_\alpha:\alpha<\gamma\}$ be the generic a.d. family added by $\Hor_\gamma$. For any $\alpha<\gamma$, $\Por^{\matit^*(\gamma,\delta)}_{\alpha+1,0}$ forces that $\dot{a}^*_\alpha$ diagonalizes $V_{\alpha,0}$ outside $\dot{A}^*(\alpha):=\{\dot{a}^*_\iota:\iota<\alpha\}$ and, by the results of the last part of Subsection \ref{SubsecExm}, $\Por^{\matit^*(\gamma,\delta)}_{\alpha+1,\delta}$ forces that $\dot{a}^*_\alpha$ diagonalizes $V_{\alpha,\delta}$ outside $\dot{A}^*(\alpha)$. Therefore, when $\gamma$ is an ordinal of uncountable cofinality, $\Por^{\matit^*(\gamma,\delta)}_{\alpha,\delta}$ forces that $\dot{A}^*(\gamma)$ is a mad family. Fix $\dot{c}^\beta_X$ as in (1).
\end{enumerate}

Fix uncountable regular cardinals $\theta_0\leq\theta_1\leq\kappa\leq\mu\leq\nu$, a cardinal $\lambda\geq\nu$ such that $\lambda^{<\kappa}=\lambda$, and a bijection $g=(g_0,g_1,g_2):\lambda\to\kappa\times\nu\times\lambda$. Denote $\pi=\lambda\nu\mu$ (ordinal product).

\newcommand{\MA}{\mathrm{MA}}

\begin{theorem}\label{coflarge}
   For any of the statements below there is a ccc poset forcing it.
   \begin{enumerate}[(a)]
      \item $\MA_{<\theta_0}$, $\add(\Nwf)=\theta_0$, $\pfrak=\afrak=\sfrak=\gfrak=\kappa$, $\cov(\Iwf)=\non(\Iwf)=\mu$ for $\Iwf\in\{\Mwf,\Nwf\}$, $\dfrak=\rfrak=\nu$, and $\cof(\Nwf)=\cfrak=\lambda$.
      \item $\MA_{<\theta_0}$, $\add(\Nwf)=\theta_0$, $\pfrak=\afrak=\cov(\Nwf)=\non(\Mwf)=\gfrak=\kappa$, $\cov(\Mwf)=\dfrak=\rfrak=\non(\Nwf)=\nu$, and $\cof(\Nwf)=\cfrak=\lambda$.
      \item $\MA_{<\theta_0}$, $\add(\Nwf)=\theta_0$, $\pfrak=\afrak=\sfrak=\cov(\Nwf)=\gfrak=\kappa$, $\cov(\Mwf)=\non(\Mwf)=\mu$, $\dfrak=\rfrak=\non(\Nwf)=\nu$, and $\cof(\Nwf)=\cfrak=\lambda$.
      \item $\MA_{<\theta_0}$, $\add(\Nwf)=\theta_0$, $\pfrak=\sfrak=\gfrak=\cov(\Nwf)=\kappa$, $\add(\Mwf)=\cof(\Mwf)=\afrak=\mu$, $\non(\Nwf)=\rfrak=\nu$, and $\cof(\Nwf)=\cfrak=\lambda$.
      \item $\MA_{<\theta_0}$, $\add(\Nwf)=\theta_0$, $\pfrak=\afrak=\cov(\Nwf)=\gfrak=\kappa$, $\sfrak=\non(\Mwf)=\cov(\Mwf)=\dfrak=\ufrak=\non(\Nwf)=\mu$ and $\cof(\Nwf)=\cfrak=\lambda$.
   \end{enumerate}
\end{theorem}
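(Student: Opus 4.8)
The plan is to prove all five items with one template, taking the $7$-value Cicho\'n-diagram model of \cite{FFMM} as the skeleton and grafting onto it the extra invariants ($\afrak,\sfrak,\rfrak,\pfrak,\gfrak,\ufrak$) by means of the Prs of Subsection~\ref{SubsecExm} and the mad-family/ultrafilter machinery. I work over a ground model of $\thzfc+\mathrm{GCH}$ and record that $|\pi|=\lambda$ (using $\lambda^{<\kappa}=\lambda$) and $\cf(\pi)=\mu$. For each item I build a ccc $3$D-coherent system $\sbf$ (Definition~\ref{DefCoherent}) with $I^\sbf=(\gamma^\sbf+1)\times(\delta^\sbf+1)$ and $\pi^\sbf=\pi$, the final poset being $\Por^\sbf_{(\gamma^\sbf,\delta^\sbf),\pi}$ (which is the top-row FS iteration in the sense of Definition~\ref{DefCoherent}). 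The two spatial lengths $\gamma^\sbf\le\delta^\sbf$ are chosen among $\{\kappa,\mu,\nu\}$ to be the two values realized as the size of a matrix-generated unbounded family and of the preserved a.d.\ family; thus the coordinate carrying Hechler's poset has length equal to the target value of $\afrak$. Along that coordinate the base is $\Por^\sbf_{(\alpha,0),0}=\Hor_\alpha$, carrying the a.d.\ family $\dot A^*=\{\dot a^*_\alpha\}$ as in $\matit^*$, and the orthogonal zero line is a Cohen backbone as in $\matit$; the bijection $g=(g_0,g_1,g_2)$ is the bookkeeping that at step $\xi$ fixes the node $(g_0,g_1)$ and the task indexed by $g_2$. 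In items separating only four values (e.g.\ (b), (e)) the unused parameter among $\kappa,\mu,\nu$ is taken to coincide with a neighbour.

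The iterands $\Qnm^\sbf_{(\alpha,\beta),\xi}$ are drawn, per item, from $\{\Cor,\Bor,\Dor,\Eor,\Mor_U\}$, plus the small posets that realize $\MA_{<\theta_0}$ and $\MA_{<\kappa}(\sigma\text{-centered})$. Since in every item $\add(\Nwf)=\theta_0$ is the least value, I use no localization forcing, so all iterands are $\theta_0$-$\Lc$-good (Example~\ref{Loc} for $\sigma$-centered posets and subalgebras of random, Lemma~\ref{smallGood} for the $\MA$-posets); hence Theorem~\ref{FSpres}(c) gives $\add(\Nwf)=\bfrak(\Lc)\le\theta_0$, and, combined with $\MA_{<\theta_0}$, $\add(\Nwf)=\theta_0$. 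The remaining small invariants are capped the same way, using case-specific goodness: $\Dbf$ for $\bfrak,\dfrak$ (Example~\ref{Dom}; random and $\Eor$ are $\Dbf$-good), $\Ed$ for $\cov(\Mwf),\non(\Mwf)$ (Example~\ref{Ed}), $\Edb$ for $\cov(\Nwf),\non(\Nwf)$ (Example~\ref{Edb}; $\sigma$-centered posets are $\Edb$-good), and $\Sp,\FSp$ for $\sfrak,\rfrak$ (Examples~\ref{Split},~\ref{FinSplit}; $\Dor$ is $\Sp$-good). The large invariants get their lower bounds from the two spatial directions: the Cohen reals $\dot c_{X,\alpha}$, $\dot c^\beta_X$ produce, via the remark after $\matit(\gamma,\delta)$ and Lemma~\ref{parallellimits}, strongly $\gamma^\sbf$- and strongly $\delta^\sbf$-$\Rbf$-unbounded families for the appropriate $\Rbf$, while the intermediate value $\mu$ (shared by the four measure/category invariants in (a), (c), by $\add(\Mwf)=\cof(\Mwf)$ in (d), etc.) is produced by the $\cf(\pi)=\mu$ cofinal rounds exactly as the middle value in \cite{FFMM}. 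Finally $\cof(\Nwf)=\cfrak=\lambda$ follows from $|\pi|=\lambda$, $\lambda^{<\kappa}=\lambda$, and the non-localized cofinal Cohen reals.

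For $\afrak$ the a.d.\ family $\dot A^*$ is forced mad: Lemma~\ref{HechlerMad} starts the diagonalization at each successor of the Hechler coordinate, Lemma~\ref{Diag-onestep} pushes it through the $\Cor/\Bor/\Eor$ iterands, and Lemma~\ref{limitMAD} carries it through limits; as the coordinate length has uncountable cofinality, $\dot A^*$ is maximal, so $\afrak$ equals that length. For $\ufrak=\mu$ in item (e) I interleave Mathias posets $\Mor_U$, using Lemma~\ref{mathias} to extend the ultrafilter coherently along the system and to preserve the diagonalizations simultaneously. For $\gfrak=\kappa$, the bound $\gfrak\le\kappa$ is Lemma~\ref{lemmagfrak} applied to an increasing $\kappa$-chain of intermediate models whose ``union'' hypothesis~(iii) is supplied by Lemma~\ref{GenNoNewReals}, while $\gfrak\ge\kappa$ follows from $\pfrak=\kappa$ and the ZFC inequality $\pfrak\le\gfrak$. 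Lastly, $\MA_{<\theta_0}$ and $\pfrak=\kappa$ (equivalently $\MA_{<\kappa}(\sigma\text{-centered})$, by Bell's theorem) are secured by letting $g_2$ enumerate, and force cofinally with, all ccc posets of size $<\theta_0$ and all $\sigma$-centered posets of size $<\kappa$; these are good for every Prs in play (Lemma~\ref{smallGood}, Example~\ref{Loc}), so they do not perturb the values above.

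The crux is simultaneous preservation across the whole array: every bound must survive along each $2$D subsystem $\sbf_\alpha$ and $\sbf^\beta$, and a single iterand at each node must be good for all the Prs whose dual pair should stay small while the backbone inflates the others, \emph{and} must respect both the Blass--Shelah/Brendle--Fischer diagonalization (Lemmas~\ref{limitMAD}--\ref{mathias}) and the $\Rbf$-unboundedness (Subsection~\ref{SubsecPres}) at once. I expect the genuinely delicate points to be (i) matching the $\gfrak$-upper bound of Lemma~\ref{lemmagfrak} with the lower bound in items where the Hechler coordinate has length $\ne\kappa$ (e.g.\ (d)), where the required $\kappa$-chain must be extracted from the random/Cohen sub-filtration rather than from a spatial coordinate, and (ii) realizing the intermediate value $\mu$ simultaneously for four invariants in (a) and (c) without disturbing the $\kappa$- and $\nu$-valued ones. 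Both are handled by importing the bookkeeping and preservation arguments of \cite{FFMM,mejia2} essentially verbatim and only substituting the iterands dictated by each constellation, which is precisely why these models can be presented as simple modifications of the ones in \cite{FFMM}.
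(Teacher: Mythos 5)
Your overall scaffolding (ccc coherent systems over the bases $\matit(\gamma,\delta)$, $\matit^*(\gamma,\delta)$; counting arguments for $\MA_{<\theta_0}$ and $\pfrak\geq\kappa$; $\theta_0$-$\Lc$-goodness of all iterands for $\add(\Nwf)=\theta_0$ and $\cof(\Nwf)=\cfrak=\lambda$; spatial Cohen families plus temporal cofinality $\mu$ for the remaining values; Lemmas \ref{mathias} and \ref{lemmagfrak} for $\ufrak$ and $\gfrak$) is indeed the paper's, and your plan is workable for items (a), (b), (c), (e). But your uniform template has a genuine gap at item (d). Your rule is that the Hechler coordinate has length equal to the target value of $\afrak$, which in (d) is $\mu$; however, in (d) the two spatial coordinates are already forced to be $\kappa$ and $\nu$: the caps $\sfrak\leq\kappa$, $\cov(\Nwf)\leq\kappa$, $\gfrak\leq\kappa$ require a length-$\kappa$ coordinate of Cohen reals (carrying strongly $\kappa$-$\Sp$- and $\kappa$-$\Edb$-unbounded families and the chain for Lemma \ref{lemmagfrak}), while $\rfrak\geq\nu$ and $\non(\Nwf)\geq\nu$ require strongly $\nu$-unbounded families along a length-$\nu$ coordinate. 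A 3D system has only two spatial directions, so there is no room for $\Hor_\mu$, and sacrificing either coordinate loses bounds that (see below) cannot be recovered by goodness. The paper's proof of (d) therefore abandons the Hechler base: it takes the Cohen-based $\matit(\kappa,\nu)$ and builds the mad family of size $\mu$ along the \emph{iteration} direction, at stages $\lambda\nu\eta$ for $\eta<\mu$, via Mathias forcing with the filter generated by the complements of the a.d.\ family built so far (the device of the second construction in (b)), with $\afrak\geq\mu$ coming from $\bfrak\geq\add(\Mwf)=\mu$ rather than from the a.d.\ family. Your ``delicate point (i)'' notices trouble in (d) only for $\gfrak$ and offers no mechanism that actually replaces the missing coordinate.

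A second, more pervasive problem is your claim that the small invariants are ``capped\dots using case-specific goodness'' ($\Sp,\FSp$ for $\sfrak$, $\Dbf$ for $\bfrak$, $\Ed$ for $\non(\Mwf)$, $\Edb$ for $\cov(\Nwf)$). This cannot be the argument: the constructions deliberately add, cofinally, unsplit reals, dominating reals, random reals and eventually different reals, and a poset adding such a real over the current extension is not good for the corresponding relation, so Theorem \ref{FSpres} is unavailable for those Prs. What keeps these invariants small in the paper is the matrix mechanism itself: the offending generics are added \emph{restrictedly}, with a single name located at a node $\Delta(\zeta)$ strictly below the top of the spatial grid, used at all coordinates above $\Delta(\zeta)$ and trivial elsewhere, so that Lemmas \ref{Fixedparallel}, \ref{Presrandom}, \ref{parallellimits} (and Lemma \ref{Diag-onestep}(i) for madness) preserve the strongly unbounded Cohen families of the coordinates; goodness is required only of the iterands added \emph{fully}, and this is exactly what dictates the choices you gloss over (full random is admissible in (a) because it is $\FSp$-good, but not in (d) where $\sfrak<\bfrak$ forces the use of $\Sp$, for which only full Hechler is good; full $\Eor$ in (c) pushes $\non(\Mwf)$ up to $\mu$ while its $\Dbf$-goodness protects $\bfrak,\sfrak\leq\kappa$). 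Relatedly, dropping $\Lor_U$ from the iterand list is not innocuous bookkeeping: the paper uses it because it simultaneously adds a dominating and an unsplit real while being $\sigma$-centered, whereas $\Mor_U$ need not add dominating reals for an arbitrary ultrafilter $U$; your substitution by $\Dor$ plus $\Mor_U$ can be made to work, but only with restricted placement and a common ultrafilter name, i.e., again through the restriction mechanism rather than through goodness.
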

\begin{proof}
  \begin{enumerate}[(a)]
     \item Fix $\Delta=(\Delta_0,\Delta_1):\nu\mu\to\kappa\times\nu$ such that
           \begin{enumerate}[(i)]
              \item for any $\zeta<\nu\mu$, $\Delta_0(\xi)$ and $\Delta_1(\xi)$ are successor ordinals,
              \item for each $(\alpha,\beta)\in\kappa\times\nu$ the set $\{\zeta<\nu\mu:\Delta(\zeta)=(\alpha+1,\beta+1)\}$ is cofinal in $\nu\mu$.
           \end{enumerate}
     We construct a 3D-coherent system $\tbf$ with $I^\tbf=(\kappa+1)\times(\nu+1)$ and $\pi^\tbf=\pi=\lambda\nu\mu$ (ordinal product) by recursion on $\xi<\pi$. $\Por_{\alpha,\beta,0}=\Por^{\matit^*(\kappa,\nu)}_{\alpha,\beta}$ for all $\alpha\leq\kappa$ and $\beta\leq\nu$. Assume we have constructed the 3D system up to $\lambda\zeta$ for $\zeta<\nu\mu$. Enumerate $\{\Qnm^\zeta_{\alpha,\beta,\rho}:\rho<\lambda\}$ all the (nice) $\Por_{\alpha,\beta,\lambda\zeta}$-names for ccc posets of size $<\theta_0$ with underlying set a subset of $\theta_0$ and $\{\Rnm^\zeta_{\alpha,\beta,\rho}:\rho<\lambda\}$ all the (nice) $\Por_{\alpha,\beta,\lambda\zeta}$-names for $\sigma$-centered posets of size $<\kappa$ with underlying set a subset of $\kappa$. As in limit steps we just take the direct limit, we only need to take care of the successor stages for $\xi\in[\lambda\zeta,\lambda(\zeta+1))$.
     \begin{enumerate}[(1)]
       \item if $\xi=\lambda\zeta$ fix $\dot{U}_\zeta$ is a $\Por_{\Delta(\zeta),\xi}$-name for a no-principal ultrafilter on $\omega$ and put $\Qnm_{\alpha,\beta,\xi}=\Lor_{\dot{U}_\zeta}$ when $\alpha\in[\Delta_0(\zeta),\kappa]$ and $\beta\in[\Delta_1(\zeta),\nu]$, otherwise $\Qnm_{\alpha,\beta,\xi}$ is the trivial poset;
       \item if $\xi=\lambda\zeta+1$, $\Qnm_{\alpha,\beta,\xi}$ is a $\Por_{\alpha,\beta,\xi}$-name for $\Bor^{V_{\alpha,\beta,\xi}}$;
       \item if $\xi=\lambda\zeta+2+2\rho$ for some $\rho<\lambda$, $\Qnm_{\alpha,\beta,\xi}=\Qnm^\zeta_{g(\rho)}$ when $\alpha\in(g_0(\rho),\kappa]$, $\beta\in(g_1(\rho),\nu]$ and $\Por_{\kappa,\nu,\xi}$ forces $\Qnm^\zeta_{g(\rho)}$ to be ccc, otherwise $\Qnm_{\alpha,\beta,\xi}$ is the trivial poset;
       \item if $\xi=\lambda\zeta+2+2\rho+1$ for some $\rho<\lambda$, $\Qnm_{\alpha,\beta,\xi}=\Rnm^\zeta_{g(\rho)}$ when $\alpha\in(g_0(\rho),\kappa]$, $\beta\in(g_1(\rho),\nu]$ and $\Por_{\kappa,\nu,\xi}$ forces $\Rnm^\zeta_{g(\rho)}$ to be $\sigma$-centered, otherwise $\Qnm_{\alpha,\beta,\xi}$ is the trivial poset.
     \end{enumerate}
     Indeed, $\Por_{\alpha,\beta,\xi+1}=\Por_{\alpha,\beta,\xi}\ast\Qnm_{\alpha,\beta,\xi}$. To fix some terminology, we say that in a stage $\xi$ as in (1) we \emph{add a restricted Laver (with ultrafilter) generic real}. For (2) we say that we \emph{add a full random generic}, for (3) we say that we \emph{perform a counting argument of ccc posets of size $<\theta_0$}, and for (4) we say that we \emph{perform a counting argument of $\sigma$-centered posets of size $<\kappa$}.\footnote{This terminology will be used in further proofs to describe briefly similar forcing constructions.} Those counting arguments yield $\MA_{<\theta_0}$ and $\pfrak\geq\kappa$ in the final extension. In particular, $\Por_{\kappa,\nu,\pi}$ forces $\add(\Nwf)\geq\theta_0$.

     Note that this construction produces a FS iteration of length $\pi$ of $\theta_0$-$\Lc$-good posets. Therefore, by Theorem \ref{FSpres}, $\Por_{\kappa,\nu,\pi}$ forces $\add(\Nwf)\leq\theta_0$ and $\cof(\Nwf)=\cfrak=\lambda$.

     Recall that the base of this construction is the 2D-coherent system $m^*(\kappa,\nu)$. Note that $\{\dot{c}_{[\omega]^{\aleph_0},\alpha}:\alpha<\kappa\}$ is preserved to be a strongly $\kappa$-$\FSp$-unbounded family in $V_{\kappa,\nu,\pi}$, so $\sfrak\leq\bfrak(\FSp)\leq\kappa$. Also, $A^*(\kappa)$ is preserved to be a mad family in $V_{\kappa,\nu,\pi}$, so $\afrak\leq\kappa$. On the other hand, $\{\dot{c}^\beta_{[\omega]^{\aleph_0}}:\beta<\nu\}$ is preserved to be strongly $\nu$-$\FSp$-unbounded in $V_{\kappa,\nu,\pi}$, so $\min\{\dfrak,\rfrak\}=\dfrak(\FSp)\geq\nu$. Besides, $\gfrak\leq\kappa$ holds in $V_{\kappa,\nu,\pi}$ by Lemma \ref{lemmagfrak}.

     For each $\zeta<\nu\mu$ let $d_\zeta$ be the dominating real added by $\Lor_{U_\zeta}$ and $l_\zeta$ be the unsplitting real added by it. It is clear that $\{d_\zeta:\zeta<\nu\mu\}$ is a dominating family and $\{l_\zeta:\zeta<\nu\mu\}$ is a reaping family, so both $\dfrak$ and $\rfrak$ are below $\nu$.

     Finally, note that $\cov(\Iwf)=\non(\Iwf)=\mu$ for $\Iwf\in\{\Mwf,\Nwf\}$ holds in $V_{\kappa,\nu,\pi}$ because of the cofinally $\mu$-many Cohen and random reals added along the iteration.

     \item The construction can be performed in two ways. The first one is a 3D-coherent system $\tbf$ constructed with base $\matit^*(\kappa,\nu)$ and $\pi^\tbf=\lambda\nu$ where, at stages between $\lambda\zeta$ and $\lambda(\zeta+1)$ for $\zeta<\nu$, we add a restricted random generic, a restricted Laver (with ultrafilter) generic and use counting arguments of ccc posets of size $<\theta_0$ and $\sigma$-centered posets of size $<\kappa$. Here the function $\Delta$ (that indicates where the restricted generic reals are added) is defined as in (a) but with domain $\nu$.

         The second construction is a 2D-coherent system $\matit$ with $I^\matit=\nu+1$, $\pi^\matit=\lambda\nu\kappa$ where $\Por_{\alpha,0}=\Cor_\alpha$ for $\alpha\leq\nu$ and, at stages between $\lambda\zeta$ and $\lambda(\zeta+1)$ for $\zeta<\nu\kappa$, we add restricted generic reals and perform counting arguments as in $\tbf$ but, in addition, we use the steps of the form $\lambda\nu\eta$ to add a mad family of size $\kappa$. Assume that we have added a $\Por_{0,\lambda\nu\eta}$-name $\dot{A}(\eta)=\{\dot{a}_{\varepsilon}:\varepsilon<\eta\}$ for an a.d. family. Let $\dot{F}_\eta$ be a $\Por_{0,\lambda\nu\eta}$-name for the filter base generated by the complements of the members of $\dot{A}(\eta)$ and the co-finite subsets of $\omega$ and let $\Qnm_{\alpha,\lambda\nu\eta}=\Mor_{\dot{F}_\eta}$ for all $\alpha\leq\nu$. They add an infinite subset $\dot{a}_\eta\in V_{0,\lambda\nu\kappa+1}$ of $\omega$ (which does not depend on $\alpha$) that is a.d. with $\dot{A}(\eta)$ and that diagonalizes $V_{\nu,\lambda\nu\kappa}$ outside $\dot{A}(\eta)$. Therefore, $\{\dot{a}_\eta:\eta<\kappa\}$ is forced to be a mad family in the final model.

     \item Construct a 3D-coherent system based in $\matit^*(\kappa,\nu)$ with $\pi^\tbf=\pi$ where, at stages between $\lambda\zeta$ and $\lambda(\zeta+1)$, a full $\Eor$-generic real, restricted random and Laver (with ultrafilter) generic reals are added and counting arguments of ccc posets of size $<\theta_0$ and $\sigma$-centered posets of size $<\kappa$ are performed.

     \item Construct a 3D-coherent system based in $\matit(\kappa,\nu)$ with $\pi^\tbf=\pi$ where, at stages between $\lambda\zeta$ and $\lambda(\zeta+1)$, a full Hechler generic real, restricted random and Laver (with ultrafilter) generic reals are added and counting arguments of ccc posets of size $<\theta_0$ and $\sigma$-centered posets of size $<\kappa$ are performed. In addition, the stages of the form $\lambda\nu\eta$ for $\eta<\mu$ are used to add a mad family of size $\mu$ with an argument as in (b).

     \item Construct a 2D-coherent system $\matit$ with $I^\matit=\kappa+1$, $\pi^\matit=\lambda\mu$, $\Por_{\alpha,0}=\Hor_\alpha$ and, at stages between $\lambda\zeta$ and $\lambda\zeta+1$, add restricted Hechler and random generic reals and perform counting arguments of ccc posets of size $<\theta_0$ and $\sigma$-centered posets of size $<\kappa$. In addition, we do the following in stages of the form $\lambda\zeta$ for $\zeta<\mu$. Assume we have constructed $\Por_{0,\lambda\zeta}$-names $\{\dot{m}_{\varepsilon}:\varepsilon<\zeta\}$ for a $\subseteq^*$-decreasing sequence of infinite subsets of $\omega$. As in \cite{VFJB11}, by recursion on $\alpha\leq\kappa$, we use Lemma \ref{mathias} to construct a $\Por_{\alpha,\lambda\zeta}$-name $\dot{U}_{\alpha,\zeta}$ of an ultrafilter that contains $\{\dot{m}_{\varepsilon}:\varepsilon<\zeta\}$ and $\dot{U}_{\iota,\zeta}$ for all $\iota<\alpha$. Put $\Qnm_{\alpha,\lambda\zeta}=\Mor_{\dot{U}_{\alpha,\zeta}}$ and denote by $\dot{m}_{\zeta}$ the Mathias generic real it adds. In the final model, $\{\dot{m}_\zeta:\zeta<\mu\}$ is a base of an ultrafilter.
  \end{enumerate}
\end{proof}

\begin{theorem}\label{clarge}
   For any of the statements below there is a ccc poset forcing it.
   \begin{enumerate}[(a)]
      \item $\MA_{<\kappa}$, $\add(\Nwf)=\pfrak=\sfrak=\gfrak=\cov(\Nwf)=\kappa$, $\add(\Mwf)=\cof(\Mwf)=\afrak=\mu$, $\non(\Nwf)=\cof(\Nwf)=\rfrak=\nu$ and $\cfrak=\lambda$.
      \item $\MA_{<\kappa}$, $\add(\Nwf)=\pfrak=\sfrak=\gfrak=\afrak=\kappa$, $\cov(\Iwf)=\non(\Iwf)=\mu$ for $\Iwf\in\{\Mwf,\Nwf\}$, $\dfrak=\cof(\Nwf)=\rfrak=\nu$ and and $\cfrak=\lambda$.
      \item $\MA_{<\kappa}$, $\add(\Nwf)=\pfrak=\gfrak=\kappa$, $\cov(\Nwf)=\add(\Mwf)=\cof(\Mwf)=\afrak=\non(\Nwf)=\mu$, $\cof(\Nwf)=\nu$ and $\cfrak=\lambda$.
      \item $\MA_{<\kappa}$, $\add(\Nwf)=\pfrak=\afrak=\gfrak=\non(\Mwf)=\kappa$, $\cov(\Mwf)=\cof(\Nwf)=\rfrak=\nu$ and $\cfrak=\lambda$.
      \item $\MA_{<\kappa}$, $\add(\Nwf)=\pfrak=\gfrak=\cov(\Nwf)=\afrak=\kappa$, $\sfrak=\non(\Mwf)=\cov(\Mwf)=\cof(\Nwf)=\ufrak=\mu$ and $\cfrak=\lambda$.
   \end{enumerate}
\end{theorem}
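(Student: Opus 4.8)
The plan is to reproduce, item by item, the constructions of Theorem~\ref{coflarge}, performing one uniform change to the bookkeeping and one genuine structural change. The uniform change is that at the counting stages I would now enumerate and generically realize every (nice name for a) ccc poset of size $<\kappa$, rather than of size $<\theta_0$; together with the $\sigma$-centered counting this forces $\MA_{<\kappa}$, hence $\pfrak\geq\kappa$ and $\add(\Nwf)\geq\kappa$. The matching upper bounds $\add(\Nwf)\leq\kappa$ come, depending on the item, either from $\add(\Nwf)\leq\cov(\Nwf)=\kappa$ (items where $\cov(\Nwf)=\kappa$) or from preserving a $\kappa$-$\Lc$-unbounded family as in Theorem~\ref{coflarge}; since posets of size $<\kappa$ are $\kappa$-$\Lc$-good by Lemma~\ref{smallGood} and the remaining iterands are $\kappa$-$\Lc$-good exactly as checked there, this change presents no new difficulty by itself.

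For the rest of the invariants I would simply transcribe the arguments of the correspondingly shaped item of Theorem~\ref{coflarge}, replacing $\theta_0$ by $\kappa$ throughout. Thus the strongly $\kappa$-$\FSp$-unbounded family $\{\dot c_{[\omega]^{\aleph_0},\alpha}:\alpha<\kappa\}$ yields $\sfrak\leq\kappa$ via Example~\ref{FinSplit}; the generic a.d.\ family of $\matit^*(\kappa,\nu)$ (items with $\afrak=\kappa$) or a Mathias-generic mad family built as in Theorem~\ref{coflarge}(b) (items with $\afrak=\mu$) is kept maximal by Lemmas~\ref{limitMAD} and~\ref{Diag-onestep}; Lemma~\ref{lemmagfrak} gives $\gfrak\leq\kappa$; the dominating and unsplitting reals of the restricted Laver-with-ultrafilter steps give $\dfrak,\rfrak\leq\nu$, matched from below by the strongly $\nu$-$\FSp$-unbounded family $\{\dot c^\beta_{[\omega]^{\aleph_0}}:\beta<\nu\}$; in item (e) the recursively chosen Mathias ultrafilter of Lemma~\ref{mathias} furnishes a base with $\ufrak\leq\mu$; and the cofinally many Cohen and (restricted or full) random reals placed by the function $\Delta$ fix $\cov(\Iwf)$ and $\non(\Iwf)$ for $\Iwf\in\{\Mwf,\Nwf\}$. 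Finally $\cfrak=\lambda$ is read off the size of the iteration.

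The genuinely new point — and the step I expect to be the main obstacle — is the structural change needed to force $\cof(\Nwf)=\nu<\lambda$ in items (a)--(d) and $\cof(\Nwf)=\mu<\lambda$ in (e). One cannot simply invoke Theorem~\ref{FSpres}(c): for a FS iteration whose $\Lc$-active length has cardinality $\lambda$ that theorem forces $|\pi|\leq\dfrak(\Lc)=\cof(\Nwf)$, i.e.\ $\cof(\Nwf)\geq\lambda$. The whole difficulty is to keep $\cof(\Nwf)=\dfrak(\Lc)$ captured by a $\Lc$-dominating family of size $\nu$ (resp.\ $\mu$) while the bookkeeping for $\MA_{<\kappa}$ still drives $\cfrak$ up to $\lambda$, and to do this without wrecking the witness for $\add(\Nwf)=\kappa$ (which, in items (b),(c), requires a $\kappa$-$\Lc$-unbounded family to survive). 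This simultaneous demand — a preserved small unbounded family and a preserved small dominating family, with the continuum strictly larger than both — is precisely what the three-dimensional coherent-system technique of \cite{FFMM} is designed to deliver: the random reals governing $\dfrak(\Lc)=\cof(\Nwf)$ are laid out in a $\nu$-indexed (resp.\ $\mu$-indexed) subpattern, and the parallel-preservation lemmas (Corollary~\ref{Presrandom} and Lemmas~\ref{Fixedparallel}, \ref{parallellimits}) are applied coordinatewise to show that no later step enlarges this dominating family even though $\lambda$-many reals are added. Carrying this preservation out simultaneously with the $\MA_{<\kappa}$ counting, and checking it against the lower bounds $\cof(\Nwf)\geq\non(\Nwf)=\nu$ (resp.\ $\geq\cof(\Mwf)=\mu$), is where the real work lies.

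The only per-item choices are then the ``flavour'' ingredients, settled exactly as in Theorem~\ref{coflarge}: a Hechler base $\matit^*(\kappa,\nu)$ when $\afrak=\kappa$ and a Cohen base $\matit(\kappa,\nu)$ with Mathias mad-family stages when $\afrak=\mu$; a full Hechler generic when the $\Mwf$-column is to be $\add(\Mwf)=\cof(\Mwf)=\mu$ and a full $\Eor$ generic when instead $\non(\Mwf)=\cov(\Mwf)=\mu$; and the placement of the restricted random and Laver generics via $\Delta$ to realize the prescribed values of $\cov(\Nwf),\non(\Nwf)$ and of $\dfrak,\rfrak$.
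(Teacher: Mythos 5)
Your bookkeeping change (counting ccc posets of size $<\kappa$, which subsumes the $\sigma$-centered counting) and your per-item flavour choices (base $\matit^*(\kappa,\nu)$ versus $\matit(\kappa,\nu)$ with Mathias mad-family stages, full Hechler or random generics, Mathias ultrafilter stages for (e)) do match the paper. But the core of your proposal --- the mechanism for $\cof(\Nwf)=\nu<\lambda=\cfrak$ in (a)--(d) and $\cof(\Nwf)=\mu$ in (e) --- is wrong, and the ingredient that actually does this job in the paper is entirely absent from your construction: \emph{amoeba forcing}. Every item of the paper's proof inserts \emph{restricted amoeba generics}: at stage $\lambda\zeta$ one adds, at all coordinates $\geq\Delta(\zeta)$, the amoeba forcing of the intermediate model $V_{\Delta(\zeta),\lambda\zeta}$. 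Amoeba adds a null set covering all null sets coded in its ground model; since by the direct-limit structure (Lemma \ref{GenNoNewReals}, applied in both directions of the 3D system) every real of $V_{\kappa,\nu,\pi}$ already lies in some $V_{\alpha,\beta,\xi}$ with $(\alpha,\beta,\xi)\in\kappa\times\nu\times\pi$, and $\Delta$ hits every successor pair cofinally, the $|\nu\mu|=\nu$ many amoeba null sets form a basis of $\Nwf$; this capture-by-bookkeeping argument, not any preservation lemma, is what gives $\cof(\Nwf)\leq\nu$ (similarly $\leq\mu$ in (e)). Your substitute --- ``random reals governing $\dfrak(\Lc)=\cof(\Nwf)$, laid out in a $\nu$-indexed subpattern'' and kept dominating by Corollary \ref{Presrandom} and Lemmas \ref{Fixedparallel}, \ref{parallellimits} --- fails twice over. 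First, random forcing is $\Lc$-good (Kamburelis, Example \ref{Loc}), which means precisely that it does \emph{not} add a slalom localizing the ground-model reals; no arrangement of random reals can yield an $\Lc$-dominating family of size $\nu<\cfrak$ (indeed, Theorem \ref{coflarge} has plenty of full random generics and nevertheless forces $\cof(\Nwf)=\cfrak=\lambda$). Second, the lemmas you cite preserve \emph{unbounded} families; nothing in them can prevent later iterands from adding a null set escaping a fixed family, which is what ``no later step enlarges this dominating family'' would require.

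This omission also makes your treatment of $\add(\Nwf)\leq\kappa$ internally inconsistent. You assert that ``the remaining iterands are $\kappa$-$\Lc$-good exactly as checked there'', but, as you yourself observe via Theorem \ref{FSpres}(c), an FS iteration all of whose (cofinally many nontrivial) iterands are $\kappa$-$\Lc$-good forces $\cof(\Nwf)\geq|\pi|=\lambda$. So whatever forces $\cof(\Nwf)$ to be small must itself fail to be $\Lc$-good --- amoeba does fail --- and then the witness for $\add(\Nwf)\leq\kappa$ has to survive non-good iterands. In items such as (c), where $\cov(\Nwf)=\add(\Mwf)=\mu$ so Cicho\'n's diagram only yields $\add(\Nwf)\leq\mu$, this is a genuine issue: the $\kappa$-$\Lc$-unbounded Cohen family of the base is preserved not by goodness but by the restricted-versus-full dichotomy of the 3D technique --- for each row above $\Delta_0(\zeta)$ the restricted amoeba is a poset belonging to a lower model, so Lemma \ref{Fixedparallel} (which needs no goodness hypothesis) carries the family across those stages, goodness being invoked only for the full Suslin and random iterands via Lemmas \ref{Suslingoodparallel} and \ref{Presrandom}. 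This distinction is the heart of the matrix/3D method, and it is exactly the point your proposal elides.
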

\begin{proof}
   \begin{enumerate}[(a)]
      \item Construct a 3D-coherent system $\tbf$ with base $\matit(\kappa,\nu)$, $\pi^\tbf=\lambda\nu\mu$ such that, at stages between $\lambda\zeta$ and $\lambda(\zeta+1)$, a full Hechler real and restricted amoeba generic and Laver (with ultrafilter) generic reals are added and a counting argument of ccc posets of size $<\kappa$ is included. In addition, stages of the form $\lambda\nu\eta$ are used to add a mad family of size $\mu$.
      \item Construct a 3D-coherent system $\tbf$ with base $\matit^*(\kappa,\nu)$, $\pi^\tbf=\lambda\nu\mu$ such that, at stages between $\lambda\zeta$ and $\lambda(\zeta+1)$, a full random real, a restricted Laver (with ultrafilter) generic and a restricted amoeba generic are added and a counting argument of ccc posets of size $<\kappa$ is included.
      \item Construct a 3D-coherent system $\tbf$ with base $\matit(\kappa,\nu)$, $\pi^\tbf=\lambda\nu\mu$ such that, at stages between $\lambda\zeta$ and $\lambda(\zeta+1)$, a full random real, a full Hechler real and a restricted amoeba generic are added, and a counting argument of ccc posets of size $<\kappa$ is included. In addition, stages of the form $\lambda\nu\eta$ are used to add a mad family of size $\mu$.
      \item Construct a 3D-coherent system $\tbf$ with base $\matit^*(\kappa,\nu)$, $\pi^\tbf=\lambda\nu$ such that, at stages between $\lambda\zeta$ and $\lambda(\zeta+1)$, a restricted amoeba generic and a restricted Laver generic real are added and a counting argument of ccc posets of size $<\kappa$ is included. Like in Theorem \ref{coflarge}(b), the same model can be obtained by a 2D-coherent system.
      \item Construct a 2D-coherent system $\matit$ with $I^\matit=\kappa+1$, $\pi^\matit=\lambda\mu$, $\Por_{\alpha,0}=\Hor_\alpha$ and, at stages between $\lambda\zeta$ and $\lambda\zeta+1$, a restricted amoeba generic is added, a counting argument of ccc posets of size $<\kappa$ is included, and stages of the form $\lambda\zeta$ are used to add an ultrafilter base of size $\mu$ as in Theorem \ref{coflarge}(e).
   \end{enumerate}
\end{proof}

\begin{theorem}\label{nonlarge}
   For any of the statements below there is a ccc poset forcing it.
   \begin{enumerate}[(a)]
      \item $\MA_{<\theta_0}$, $\add(\Nwf)=\theta_0$, $\cov(\Nwf)=\theta_1$, $\pfrak=\sfrak=\gfrak=\afrak=\kappa$, $\non(\Mwf)=\cov(\Mwf)=\mu$, $\dfrak=\rfrak=\nu$ and $\non(\Nwf)=\cfrak=\lambda$.
      \item $\MA_{<\theta_0}$, $\add(\Nwf)=\theta_0$, $\cov(\Nwf)=\theta_1$, $\pfrak=\gfrak=\afrak=\non(\Mwf)=\kappa$, $\cov(\Mwf)=\dfrak=\rfrak=\nu$ and $\non(\Nwf)=\cfrak=\lambda$.
      \item $\MA_{<\theta_0}$, $\add(\Nwf)=\theta_0$, $\cov(\Nwf)=\theta_1$, $\pfrak=\gfrak=\afrak=\kappa$, $\sfrak=\non(\Mwf)=\cov(\Mwf)=\dfrak=\ufrak=\mu$ and $\non(\Nwf)=\cfrak=\lambda$.
   \end{enumerate}
\end{theorem}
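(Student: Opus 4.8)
The plan is to obtain (a)--(c) as variants of the models of Theorems \ref{coflarge} and \ref{clarge}: I would keep the same coherent-system skeletons --- a $3$D system over the base $\matit^*(\kappa,\nu)$ for (a) and (b), and the Hechler-based system of \ref{coflarge}(e) for (c) --- and only redistribute the iterands acting on the measure side. Retaining the base $\matit^*(\kappa,\nu)$ preserves the generic a.d.\ family $\dot A^*(\kappa)$ (so $\afrak\leq\kappa$) and the column Cohen reals as a strongly $\kappa$-$\FSp$-unbounded family (so $\sfrak\leq\kappa$ in (a),(b)), while Lemma \ref{lemmagfrak} gives $\gfrak\leq\kappa$. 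I would copy verbatim the two counting arguments, of ccc posets of size $<\theta_0$ and of $\sigma$-centered posets of size $<\kappa$, to force $\MA_{<\theta_0}$, $\pfrak\geq\kappa$ and $\add(\Nwf)\geq\theta_0$. Since no amoeba generic is ever added, every iterand is $\theta_0$-$\Lc$-good, so Theorem \ref{FSpres} applied to $\Lc$ fixes $\add(\Nwf)=\theta_0$ and $\cof(\Nwf)=\cfrak=\lambda$. The large dominating/reaping value is produced exactly as before by restricted Laver-with-ultrafilter generics, giving $\dfrak=\rfrak=\nu$ in (a),(b); in (c) the Mathias-with-ultrafilter steps of \ref{coflarge}(e) are reused, yielding $\ufrak=\dfrak=\mu$.

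The genuinely new feature is the measure-side constellation $\theta_0=\add(\Nwf)<\theta_1=\cov(\Nwf)<\lambda=\non(\Nwf)=\cfrak$, in place of the symmetric $\cov(\Nwf)=\non(\Nwf)$ of \ref{coflarge}. First I would force $\cov(\Nwf)\geq\theta_1$ by inserting \emph{restricted} random generics, localized in the array (by an analogue of the function $\Delta$) so that, through Corollary \ref{Presrandom} and Lemma \ref{parallellimits}, the random reals they add cannot be covered by fewer than $\theta_1$ null sets in the final corner model. The matching bound $\cov(\Nwf)\leq\theta_1$ should follow from the width of the sub-array carrying those randoms together with the $\Edb$-goodness of every other iterand (all $\sigma$-centered or of size $<\kappa$) and the inequality $\cov(\Nwf)\leq\bfrak(\Edb)$ of Example \ref{Edb}. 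The remaining meager invariants are set as in the parent theorems --- cofinal eventually different (or Cohen) generics together with the matrix widths give $\non(\Mwf)=\cov(\Mwf)=\mu$ in (a),(c) and the unbalanced $\non(\Mwf)=\kappa<\cov(\Mwf)=\nu$ in (b).

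The hard part will be the simultaneous separation on the measure side, which is exactly where the $3$D preservation theory is indispensable. In a plain FS iteration the cofinal Cohen reals tie $\cov(\Mwf)$ and $\non(\Nwf)$ to the single value $\cf(\pi)$, so these cannot be pulled apart; and the random reals that are needed to raise $\cov(\Nwf)$ to $\theta_1$ threaten to form a non-null set of size $\theta_1$, capping $\non(\Nwf)$ at $\theta_1$ rather than $\lambda$. Both couplings must be severed by the array: Lemmas \ref{parallellimits} and \ref{Fixedparallel} let a real added high in one coordinate stay $\Rbf$-unbounded over an entire lower sub-iteration, keeping the horizontal control of $\cov(\Nwf)=\theta_1$ and $\cov(\Mwf)=\mu$ independent of the vertical growth of $\non(\Nwf)$ to $\lambda=\cfrak$. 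The delicate bookkeeping I expect to be the real obstacle is to choose the localization of the restricted randoms so that the $\theta_1$ of them witness $\cov(\Nwf)\geq\theta_1$ yet are absorbed into null sets in the limit, and to check that this placement disturbs neither the preserved mad family, nor the Laver (or Mathias) dominating/reaping reals, nor the $\Lc$- and $\Edb$-good bounds fixing the other invariants. Once the localization is in place, each individual cardinal computation should reduce to an application of Theorem \ref{FSpres} or of the parallel-preservation lemmas, just as in \ref{coflarge} and \ref{clarge}.
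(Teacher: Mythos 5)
Your skeleton (the bases $\matit^*(\kappa,\nu)$ and the Hechler-based 2D system of Theorem \ref{coflarge}(e), the Laver/Mathias steps, the counting arguments of ccc posets of size $<\theta_0$ and of $\sigma$-centered posets of size $<\kappa$, and Theorem \ref{FSpres} for $\add(\Nwf)$ and $\cof(\Nwf)$) coincides with the paper's. However, the one genuinely new ingredient --- forcing $\cov(\Nwf)=\theta_1$ --- is handled by a mechanism that cannot work, and it is exactly the point you admit leaving unresolved. The paper adds \emph{no} random generic reals in Theorem \ref{nonlarge}, neither full nor restricted; instead it introduces a third counting argument, over names for \emph{subalgebras of random forcing of size $<\theta_1$}. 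This forces the corresponding fragment of $\MA$: any family of $<\theta_1$ null sets in the final model appears in some $V_{\alpha,\beta,\xi}$, generates there a random subalgebra of size $<\theta_1$, and that subalgebra is eventually forced with by the bookkeeping above $(\alpha,\beta)$; hence $\cov(\Nwf)\geq\theta_1$. Since these posets are small, they are $\theta_1$-$\Edb$-good by Lemma \ref{smallGood} and $\Lc$-good by Kamburelis (Example \ref{Loc}), so \emph{all} iterands remain $\theta_1$-$\Edb$-good and $\theta_0$-$\Lc$-good, and Theorem \ref{FSpres} yields $\cov(\Nwf)\leq\bfrak(\Edb)\leq\theta_1$ and $\non(\Nwf)\geq\dfrak(\Edb)\geq\lambda$, besides $\add(\Nwf)=\theta_0$.

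Restricted random generics cannot replace this, for a structural reason: the array has no $\theta_1$-sized direction. For your lower bound $\cov(\Nwf)\geq\theta_1$, the sub-models receiving randoms must capture every family of $<\theta_1$ null sets of the corner model; such families appear cofinally in all three coordinates of $\kappa\times\nu\times\pi$, so the restrictions would have to be cofinal in the whole array --- but then the same capturing argument applies verbatim to every family of $<\kappa$ null sets and forces $\cov(\Nwf)\geq\kappa$, contradicting $\cov(\Nwf)=\theta_1$ when $\theta_1<\kappa$. If instead the restrictions are confined to a ``sub-array of width $\theta_1$'', a family of $<\theta_1$ null sets coded in rows above that sub-array is never captured and the lower bound is lost. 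The upper bound collapses as well: a restricted random is the random algebra of a sub-model, a poset of size at least $\lambda$, so Lemma \ref{smallGood} does not apply; it adds a real that $\Edb$-dominates all of $\prod b$ of its sub-model, thereby destroying every $\theta_1$-$\Edb$-unbounded family contained in that sub-model, which is precisely the witness you need for $\cov(\Nwf)\leq\bfrak(\Edb)\leq\theta_1$. (Cofinal restricted randoms are exactly the mechanism of Theorem \ref{coflarge}(b), and what they produce is $\cov(\Nwf)=\kappa$ and, when $\nu<\lambda$, also $\non(\Nwf)\leq\nu<\lambda$, both incompatible with the constellation of Theorem \ref{nonlarge}.) So no localization threads the needle between your two bounds; the correct repair is the paper's: delete the random generics altogether and run an $\MA$-style bookkeeping over small random subalgebras.
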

\begin{proof}
  \begin{enumerate}[(a)]
     \item Construct a 3D-coherent system $\tbf$ with base $\matit^*(\kappa,\nu)$, $\pi^\tbf=\lambda\nu\mu$ such that, at stages between $\lambda\zeta$ and $\lambda(\zeta+1)$, a full $\Eor$-generic and a restricted Laver (with ultrafilter) generic real are added, and counting arguments of ccc posets of size $<\theta_0$, subalgebras of random forcing of size $<\theta_1$ and $\sigma$-centered posets of size $<\kappa$ are included.
     \item Construct a 3D-coherent system $\tbf$ with base $\matit^*(\kappa,\nu)$, $\pi^\tbf=\lambda\nu$ such that, at stages between $\lambda\zeta$ and $\lambda(\zeta+1)$, a restricted Laver (with ultrafilter) generic real is added, and counting arguments of ccc posets of size $<\theta_0$, subalgebras of random forcing of size $<\theta_1$ and $\sigma$-centered posets of size $<\kappa$ are included. As in Theorem \ref{coflarge}(b), the model can also be obtained by a 2D-coherent system.
     \item Construct a 2D-coherent system $\matit$ with $I^\matit=\kappa+1$, $\pi^\matit=\lambda\mu$, $\Por_{\alpha,0}=\Hor_\alpha$ and, at stages between $\lambda\zeta$ and $\lambda\zeta+1$, a restricted Hechler real is added, counting arguments of ccc posets of size $<\theta_0$, subalgebras of random forcing of size $<\theta_1$ and $\sigma$-centered posets of size $<\kappa$ are included, and stages of the form $\lambda\zeta$ are used to add an ultrafilter base of size $\mu$.
  \end{enumerate}
\end{proof}


\subsection*{Acknowledgements}
This paper was developed in the framework of the RIMS workshop 2016 on Infinite Combinatorics and Forcing Theory at Kyoto University, Japan. The author is very grateful with professor Teruyuki Yorioka for organizing such a wonderful workshop, for his support and for the chance to participate in and help during the workshop.

\bibliography{appl}
\bibliographystyle{alpha}


\end{document}